\documentclass[11pt,a4paper]{article}
		\usepackage[margin=2.45cm]{geometry}
		\usepackage{amsmath,amssymb,amsfonts,bm,mathtools}
		\usepackage{amsthm}
\usepackage{booktabs,multirow,array,tabularx,longtable}
		\usepackage{graphicx,subcaption,float}
\numberwithin{equation}{section}
 \newtheorem{remark}{Remark}
 \newtheorem{theorem}{Theorem}[section]

		\usepackage{placeins,needspace}
		\floatstyle{ruled}
		\newfloat{algorithm}{tbp}{loa}
		\floatname{algorithm}{Algorithm}
		\usepackage{algpseudocode}
		\algrenewcommand\algorithmicrequire{\textbf{Input:}}
		\algrenewcommand\algorithmicensure{\textbf{Output:}}
		\algrenewcommand\algorithmicindent{1.0em}
		\algrenewcommand\alglinenumber[1]{\footnotesize #1:}
		\usepackage{tikz}
		\usetikzlibrary{arrows.meta,positioning,shapes.geometric,fit}
		\usepackage{xcolor}
		\usepackage{hyperref}
		\usepackage{cleveref}
		\usepackage{enumitem}
		\usepackage{siunitx}
		\usepackage{microtype}
      \usepackage{titling}

\pretitle{\begin{center}\LARGE\bfseries}
\posttitle{\par\end{center}\vskip 0.5em}

        \microtypesetup{nopatch=footnote}
		\usepackage{setspace}
		\usepackage{fancyhdr}

		\hypersetup{colorlinks=true,linkcolor=blue!60!black,citecolor=blue!60!black,urlcolor=blue!60!black}
		\usepackage{titlesec}
		\titlespacing*{\section}{0pt}{18pt}{9pt}
		\titlespacing*{\subsection}{0pt}{14pt}{6pt}
        \titleformat{\section}
  {\normalfont\Large\bfseries}
  {\thesection}{1em}{}
		
		\newcommand{\Ldata}{\mathcal{L}_{\mathrm{MC}}}
		\newcommand{\Lweak}{\mathcal{L}_{\mathrm{W}}}
		
		\newcommand{\Lphys}{\mathcal{L}_{\mathrm{phys}}}
		
		\newcommand{\J}{\bm{J}}
		\newcommand{\R}{\mathbb{R}}

		\newcommand{\safeincludegraphics}[2][]{%
		 \IfFileExists{#2}{\includegraphics[#1]{#2}}{%
		 \fbox{\parbox[c][4.7cm][c]{0.86\linewidth}{\centering
		 \textbf{Figure placeholder}\\[1mm]
		 {\ttfamily\footnotesize\detokenize{#2}}}}%
		 }%
		}
		
		\title{{Energetic Variational Deep Learning for Solving High-Dimensional Stationary Fokker-Planck Equations}\\
		 }
		\author{Fanze Kong \thanks{Department of Applied Mathematics, University of Washington, Seattle, WA 98195, USA; {\sf Email: fzkong@uw.edu}}; Zhouyuan Qin \thanks{School of Mathematics, Hunan University, Changsha, 410082, P. R. China, {\sf  Email: zhouyuan@hnu.edu.cn}} and  Shuangquan Xie\thanks{School of Mathematics, Hunan University, Changsha, 410082, P. R. China, {\sf  Email: xieshuangquan@hnu.edu.cn} } }
        
\begin{document}
		\maketitle
		
		\begin{abstract}
		This paper develops a deep learning framework for solving high-dimensional stationary Fokker-Planck equations by combining Monte Carlo weak supervision, energy-variational physics constraints, and quasi-Newton refinement. Monte Carlo statistics anchor the scale  of the stationary density, while a free-energy variational loss enforces equilibrium physics; the formulation also applies to stationary systems with pointwise
orthogonal rotational components and, using a family of test functions,
to systems with more general non-gradient drift. Numerical results demonstrate accurate and robust density approximations across a range of high-dimensional stationary Fokker–Planck problems, including rough-potential and noisy-data settings.
		
		\textbf{Keywords:} Fokker-Planck equation; stationary solutions; Monte Carlo; deep learning;   energetic variational loss; non-gradient drift
		\end{abstract}
		
		\section{Introduction}
        Stochastic differential equations (SDEs) constitute a fundamental framework for modeling dynamical systems subject to uncertainty and unresolved fluctuations, with applications ranging from option pricing and stochastic volatility modeling in mathematical finance to molecular dynamics and stochastic reaction systems in chemistry and biology.  Mathematically, such stochastic dynamics are typically represented by an It\^{o} SDE of the form 
        \begin{align}\label{SDE_main}
        dX_t = {\mathbf{b}}(X_t)dt + \sigma(X_t)dW_t,
        \end{align}
        where \(X_t\in\mathbb{R}^d\) denotes the state of the system, $\mathbf{b}:\mathbb{R}^d\rightarrow\mathbb{R}^d$ is the drift, \(\sigma:\mathbb{R}^d\to\mathbb{R}^{d\times d}\) is the diffusion coefficient matrix    and \(W_t\) is an \(d\)-dimensional Wiener process.  The Fokker-Planck equation provides a deterministic description of the evolution of the probability density associated with the stochastic process \(X_t\). When the stochastic dynamics approach a statistical steady state, the corresponding probability density becomes time-independent and satisfies the stationary Fokker-Planck equation.  Its solution gives the invariant probability distribution of \(X_t\) and, when the process is ergodic, characterizes the asymptotic distribution of the stochastic process.  In some cases, the stationary distribution can be characterized analytically. A classical example is Langevin dynamics in statistical physics, whose invariant distribution is given by the Gibbs measure. For more general stochastic dynamics, particularly those with non-gradient drift or state-dependent diffusion, the invariant distribution typically admits no closed-form representation and must be computed numerically by solving the corresponding stationary Fokker-Planck equation.

	    Classical numerical PDE methods, such as finite difference and finite element methods, have been extensively studied for Fokker-Planck equations and provide accurate and theoretically well-established approximations in low-dimensional settings \cite{masud2005application,kumar2009partition}.   Their applicability to high-dimensional problems, however, is severely limited by the curse of dimensionality, and the numerical solution of high-dimensional stationary Fokker-Planck equations remains challenging.   These challenges call for mesh-free numerical formulations suitable for high-dimensional state spaces.   Neural networks provide a flexible computational framework in this direction, with advances in both approximation theory and neural network-based solvers for high-dimensional PDEs \cite{barron1993universal,sirignano2018dgm,han2018solving,hutzenthaler2020proof,grohs2023proof}.   In particular, for stationary Fokker-Planck equations, Zhai et al. \cite{zhai2022deep} developed a data-driven deep learning method that combines Monte Carlo samples with PDE constraints, using the sampled data as a reference to eliminate the reliance on explicit boundary conditions and a neural network to represent the stationary density in a mesh-free manner.   The PDE constraint in this approach is imposed through a strong-form residual. Such a residual-based formulation is central to physics-informed neural networks (PINNs) \cite{raissi2019physics}, a pioneering framework for incorporating differential equations into neural network training.  Despite their success, PINNs can face optimization difficulties associated with higher-order derivatives and sensitivity to noisy data. This motivates the use of weak and variational formulations, which incorporate the governing equations in an integral form and reduce the order of derivatives required in the loss function.  Several neural PDE solvers have been developed along this direction, including the Deep Ritz method \cite{yu2018deep},  variational physics-informed neural networks (VPINNs) \cite{kharazmi2021hp}, with variational formulations further extended to PDE-governed inverse problems \cite{kong2025moment,kong2025moment1}.  Variational formulations reduce the order of derivatives through integration by parts and provide a more robust and computationally efficient alternative to strong-form residual minimization.  Our variational formulation originates from the energetic variational approach (EnVarA) \cite{onsager1931reciprocal1,onsager1931reciprocal2}, a thermodynamically consistent framework for modeling systems involving both conservative and dissipative dynamics. At its core, EnVarA characterizes the system evolution through the energy-dissipation law
\[
\frac{\mathrm d}{\mathrm d t}E^{\mathrm{total}}=-\Delta\leq 0,\]
where \(E^{\mathrm{total}}\) denotes the total energy of the system, consisting of the kinetic energy and the Helmholtz free energy, and \(\Delta\) represents the rate of energy dissipation.

        In this paper, motivated by the robustness of the variational formulation, we develop a framework that combines Monte Carlo (MC) data with a weak-form loss for high-dimensional stationary Fokker-Planck equations, integrating statistical information with the underlying physical structure to improve accuracy and training stability. Our approach employs a progressive training strategy, starting with Monte Carlo-based pretraining, followed by joint optimization of the Monte Carlo and variational losses and the  Limited-memory Broyden–Fletcher–Goldfarb–Shanno (L-BFGS)-based fine-tuning to further improve the accuracy of the solution.  In particular, to address the scale imbalance between the Monte Carlo and variational losses during joint optimization, we reformulate the variational loss in a degree-zero homogeneous form.	The main contributions of the proposed framework are summarized as follows:
		\begin{itemize}
		 \item A weak variational loss is derived from the free-energy dissipation structure for equilibrium steady states, enhancing robustness to rough potentials and noisy data. Dividing the loss by the total mass further makes it scale-invariant and positively homogeneous of degree zero, making its scale comparable to that of the Monte Carlo loss during optimization.
         
		 \item A weak-form learning framework combining Monte Carlo data and an integral variational physics loss is developed to reduce training-point requirements and computational cost, enabling efficient learning of high-dimensional steady-state problems.
		 \item The proposed framework applies to both equilibrium steady states and nonequilibrium steady states with pointwise-orthogonal potential and rotational components, extending its scope beyond equilibrium systems.
		\end{itemize}

        The rest of this paper is organized as follows. Section \ref{sec:formulation} presents the mathematical formulation, starting from the equilibrium condition of the free-energy dissipation law, followed by the derivation of the variational loss and its numerical discretization.  Section \ref{sec:method} is devoted to the proposed three-stage learning framework, comprising Monte Carlo pretraining, joint optimization of  Monte Carlo and variational losses, and L-BFGS-based fine-tuning.   The accuracy, robustness, and high-dimensional performance of the proposed method are examined through numerical experiments in Section \ref{sec:numerics}. It also includes ablation studies comparing different combinations of Monte Carlo data, strong- and weak-form losses, and Adam and L-BFGS optimization strategies.  Section \ref{sec:conclusion} concludes the paper with a summary of the main findings and directions for future research.

        \section{Mathematical Formulation}		\label{sec:formulation}
	   For simplicity, we assume an isotropic diffusion coefficient
    \(\sigma=\sqrt{2D}\,I_d\) in \eqref{SDE_main}, with constant \(D>0\). Then the SDE becomes

		\begin{equation}
		\mathrm dX_t=\bm b(X_t)\,\mathrm dt+\sqrt{2D}\,\mathrm dW_t,
		\qquad X_t\in\R^d.
		\label{eq:sde}
		\end{equation}
         Let \(m(x,t)\) denote the probability density of the stochastic state \(X_t\). Then, 
       $m(x,t)$ satisfies the following Fokker-Planck equation
		\begin{equation}
		\partial_t m=D\Delta m-\nabla\cdot(\bm b m)
		 =-\nabla\cdot\J[m],
		\qquad
		\J[m]:=\bm b m-D\nabla m,
		\label{eq:fp}
		\end{equation}
        where \(\J\) denotes the probability flux.
		Hence, the stationary density $m_\infty$ is the solution to
		\begin{equation}
		\nabla\cdot\J[m_\infty]=0.
		\label{eq:steady_general}
		\end{equation}

                We consider three cases according to the structure of the drift and the available stationary relation: the equilibrium zero-current case, the pointwise-orthogonal nonequilibrium case, and the general nonequilibrium case handled through a test-function family. For the gradient case, $\bm b=-\nabla V,$ corresponding to an equilibrium steady state, which satisfies $\J[m_\infty]=0$.
        In the non-gradient case, corresponding to a nonequilibrium steady state, we assume the drift is decomposed as
\begin{align}
\bm b=-\nabla V+\bm R,
\qquad
\bm R\cdot\nabla V=0
\quad\text{pointwise},
\label{non-equi-condition}
\end{align}
where \(V\) denotes the potential and \(\bm R\) the rotational component of the drift.
	 
Next, we derive the variational loss within the deep learning framework  for both cases from the free-energy dissipation structure. First of all, we have the free energy is defined as
		\begin{equation}
		\mathcal F[m]
		=D\int_{\mathbb R^d} m\log m\,\mathrm dx
		 +\int_{\mathbb R^d} V m\,\mathrm dx.
		\label{eq:free_energy}
		\end{equation}

		\noindent\textbf{Case I: Equilibrium Steady State.} 
      In light of ${\bf b}=-\nabla V$, a direct calculation gives
\begin{align}
\frac{\mathrm d}{\mathrm dt}\mathcal F[m(t)]
&=\int_{\mathbb R^d} (D\log m+V)\,
\nabla\cdot\left(m\nabla(D\log m+V)\right)\,\mathrm dx\nonumber \\
&=-\int_{\mathbb R^d}  m\left|\nabla(D\log m+V)\right|^2\,\mathrm dx\nonumber \\
&=-\int_{\mathbb R^d} 
\frac{\left|D\nabla m-\bm b m\right|^2}{m}\,\mathrm dx
\le 0.
\label{eq:dissipation_identity}
\end{align}
Moreover, at an equilibrium steady state, the dissipation vanishes, and \eqref{eq:dissipation_identity} yields
\begin{align}
\int_{\mathbb R^d}
\frac{|D\nabla m_\infty-\bm b m_\infty|^2}{m_\infty}\,dx=0.
\label{eq:zero_flux}
\end{align}
We approximate the stationary density \(m_\infty\) by a neural network \(u_\theta\) with parameters \(\theta\). The zero-flux condition \eqref{eq:zero_flux} then yields the variational physics loss
		\begin{equation}
		\Lweak(\theta)=
		\frac{\displaystyle\int_\Omega
		\frac{\left|D\nabla u_\theta-\bm b u_\theta\right|^2}
		{u_\varepsilon}\,\mathrm dx}
		{\displaystyle\int_\Omega u_\theta\,\mathrm dx}, ~~u_\varepsilon=\max(u_\theta,\varepsilon_u),
		\qquad
		\varepsilon_u=10^{-12},
		\label{eq:weak_loss}
		\end{equation}
        where  \(\Omega\subset\mathbb{R}^d\) denotes a truncated computational domain approximating the full state space \(\mathbb{R}^d\).  Here, the normalization by \(\int_\Omega u_\theta\,\mathrm dx\) ensures scale invariance under \(u_\theta\mapsto c u_\theta\), \(c>0\), which facilitates the balance between the physics and Monte Carlo data losses and the regularized density \(u_\varepsilon=\max(u_\theta,\varepsilon_u)\) is used only to avoid numerical instability near \(u_\theta=0\).

				\medskip
		\noindent\textbf{Case II: Nonequilibrium Steady State with Pointwise Orthogonal Drift Decomposition.}
Noting that $\bf b$ satisfies (\ref{non-equi-condition}), we compute the evolution of the free energy and obtain
        \begin{align}
\frac{\mathrm d}{\mathrm dt}\mathcal F[m(t)]
&=
\int_\Omega (D\log m+V)
\left[
\nabla\cdot\left(m\nabla(D\log m+V)\right)
-\nabla\cdot(m\bm R)
\right]\mathrm dx
\nonumber\\
&=
-\int_\Omega
m\left|\nabla(D\log m+V)\right|^2\,\mathrm dx
+
\int_\Omega
m\bm R\cdot\nabla(D\log m+V)\,\mathrm dx
\nonumber\\
&=-\int_\Omega
m\left|\nabla(D\log m+V)\right|^2\,\mathrm dx+
D\int_\Omega \bm R\cdot\nabla m\,dx
+\int_\Omega m\bm R\cdot\nabla V\,dx\nonumber\\
&=
-\int_\Omega
m\left|\nabla(D\log m+V)\right|^2\,\mathrm dx
\le 0,
\label{eq:dissipation_noneq}
\end{align}
where we have used $\bm { R}\cdot \nabla V =0 $ and $\nabla\cdot \bm{R}=0.$

Similarly, we approximate \(m_\infty\) by a neural network \(u_\theta\).  In case II, the dissipation identity \eqref{eq:dissipation_noneq} yields the variational physics loss
		\begin{equation}
		\Lweak(\theta)=
		\frac{\displaystyle\int_\Omega
		\frac{
		\left|D\nabla u_\theta
		+u_\theta\nabla V\right|^2
		}
		{u_\varepsilon}\,\mathrm dx}
		{\displaystyle\int_\Omega u_\theta\,\mathrm dx},\label{eq:orthogonal_weak_loss}
		\end{equation}
        where $u_{\varepsilon}$ is defined in (\ref{eq:weak_loss}).

		\medskip
\noindent\textbf{Case III: General nonequilibrium steady states.}
Write $\J[u]=\bm b\,u-D\nabla u$.
A nonequilibrium steady state may have a nonzero
current, but it satisfies
$\nabla\cdot\J[m_\infty]=0$.
By using integration by parts, one has
\[
  \int_\Omega
  \J[m_\infty]\cdot\nabla\varphi\,\mathrm dx=0
  \qquad
  \bigl(\varphi\in H_0^1(\Omega)\bigr).
\]

Choose nonzero test functions
$\varphi_1,\ldots,\varphi_K\in H_0^1(\Omega)$
and set
$\widehat\varphi_k
=\varphi_k/\|\nabla\varphi_k\|_{L^2(\Omega)}$.
For a positive approximation $u_\theta$, define
\begin{equation}
  r_k(\theta)
  =
  \frac{\displaystyle
    \int_\Omega
    \J[u_\theta]\cdot\nabla\widehat\varphi_k
    \,\mathrm dx}
  {\displaystyle
    \int_\Omega u_\theta\,\mathrm dx},
  \qquad k=1,\ldots,K.
  \label{eq:ness_test_moments}
\end{equation}
We measure the error in these weak equations by
\begin{equation*}
  \Lweak^{\rm TF}(\theta)
  =
  \frac1K\sum_{k=1}^K\omega_k r_k(\theta)^2,
\end{equation*}
where $\omega_k>0$ and
$K^{-1}\sum_{k=1}^K\omega_k=1$.
For finite $K$, this loss tests the stationary
equation against the selected $K$ functions.  The constant $c>0$ cancels between the numerator
and denominator of \eqref{eq:ness_test_moments} , so $u_\theta$ and $cu_\theta$
give the same weak loss.

The loss uses the original drift $\bm b$ and requires
only $u_\theta$ and $\nabla u_\theta$.
It allows a nonzero stationary current and does not
use the pointwise orthogonality condition in Case II.
In the numerical experiment below, we choose the
$K$ sine functions corresponding to the smallest
eigenvalues of the Dirichlet Laplacian on the
computational domain.
We set $\omega_k=1$, so the loss is the average
of the squared quantities $r_k(\theta)^2$.


\subsection{Discretization}
        In this subsection, we discretize the SDE and the proposed loss functions for subsequent numerical training.  

        Equation \eqref{eq:sde} is discretized using the Euler–Maruyama scheme. After a burn-in period \(T_{\rm burn}\) to reduce the influence of the initial condition, trajectory states are sampled at fixed time intervals. The samples are projected onto a regular histogram grid to obtain the empirical stationary-density values \(\widehat m_i\), from which \(N_{\rm ref}\) data pairs \(\{(x_i,\widehat m_i)\}_{i=1}^{N_{\rm ref}}\) are selected for training.  Here, \(T_{\rm burn}\) is chosen sufficiently large for the trajectory to reach the stationary regime.

First of all, we define  the standard pointwise mean squared error (MSE) of MC data as
		\begin{equation}
		\Ldata(\theta)
		=\frac{1}{N_{\rm ref}}\sum_{i=1}^{N_{\rm ref}}
		\left(u_\theta(x_i)-\widehat m_i\right)^2.
		\label{eq:data_mse}
		\end{equation}
	Denote the \(i\)-th reference bin by
		\[
		B_i=\prod_{k=1}^{d}\left[x_{i,k}-\frac{h_k}{2},\,x_{i,k}+\frac{h_k}{2}\right],
		\qquad |B_i|=\prod_{k=1}^{d}h_k,
		\]
        where \(x_{i,k}\) denotes the \(k\)-th coordinate of the center of the \(i\)-th reference bin, and \(h_k\) is the bin size along the \(k\)-th coordinate dimension, for \(k=1,\ldots,d\).
		In addition, the histogram density \(\widehat m_i\) approximates the average density over \(B_i\),
        \[
        \bar m_i=\frac{1}{|B_i|}\int_{B_i}m(x)\,dx.
        \]
        We further discretize the  integrals in the physics losses using a set of physics training points.  Firstly, let \(\mathcal P=\{z_j\}_{j=1}^{N_p}\subset\Omega\) denote the set of physics training points.
		
  For the low-dimensional examples, the integrals are approximated using Sobol quasi-Monte Carlo points, which are
		\begin{equation}
		\int_\Omega f(x)\,\mathrm dx
		\approx
		\frac{|\Omega|}{N_p}\sum_{j=1}^{N_p}f(z_j).
		\label{eq:sobol_quadrature}
		\end{equation}

For Case I introduced in Section~\ref{sec:formulation}, we define		\begin{equation}
		\bm r_j^{\rm eq}
		=
		D\nabla u_\theta(z_j)-\bm b (z_j)u_\theta(z_j),
		\qquad
		u_{\varepsilon,j}
		=
		\max\{u_\theta(z_j),\varepsilon_u\},
		\label{eq:eq_pointwise_residual}
		\end{equation}
        where $\varepsilon_{u}$ is given in (\ref{eq:weak_loss}).
Moreover, applying \eqref{eq:sobol_quadrature} to both the numerator and denominator of \eqref{eq:weak_loss} yields
		\begin{equation}
		\Lweak^{(N_p)}(\theta)
		=
		\frac{\displaystyle
		\sum_{j=1}^{N_p}
		\frac{\left|\bm r_j^{\rm eq}\right|^2}{u_{\varepsilon,j}}
		}{
		\displaystyle
		\sum_{j=1}^{N_p}u_\theta(z_j)
		}.
		\label{eq:weak_loss_discrete}
		\end{equation}

				For Case II, the residual at each physics point is defined using only the gradient component of the drift:
		\begin{equation}
		\bm r_j^{\rm orth}
		=
		D\nabla u_\theta(z_j)
		+u_\theta(z_j)\nabla V(z_j).
		\label{eq:orthogonal_pointwise_residual}
		\end{equation}
	
	Moreover, applying \eqref{eq:sobol_quadrature} to \eqref{eq:orthogonal_weak_loss} gives
		\begin{equation}
		\Lweak^{(N_p)}(\theta)
		=
		\frac{\displaystyle
		\sum_{j=1}^{N_p}
		\frac{\left|\bm r_j^{\rm orth}\right|^2}{u_{\varepsilon,j}}
		}{
		\displaystyle
		\sum_{j=1}^{N_p}u_\theta(z_j)
		}.
		\label{eq:orthogonal_weak_loss_discrete}
		\end{equation}

	For the high-dimensional example, uniform sampling over the computational domain becomes difficult as the dimension increases. We therefore use importance sampling for the physics points. Specifically, the physics points \(z_j\) are sampled from a proposal density \(q\), with the corresponding importance weights defined as

		\begin{equation}
		w_j=\frac{1}{|\Omega|q(z_j)},
		\label{eq:importance_weight_discrete}
		\end{equation}
		then the volume integral of an integrable function \(f\) is then approximated by
		\begin{equation}
		\int_\Omega f(x)\,\mathrm dx
		\approx
		\frac{|\Omega|}{N_p}
		\sum_{j=1}^{N_p}w_j f(z_j).
		\label{eq:importance_quadrature_discrete}
		\end{equation}
Accordingly, in the high dimension, the integral physics losses in two cases are given by
		\begin{equation}
		\Lweak^{(N_p)}(\theta)
		=
		\frac{\displaystyle
		\sum_{j=1}^{N_p}
		w_j\frac{\left|\bm r_j^{\rm eq/\rm {orth}}\right|^2}{u_{\varepsilon,j}}
		}{
		\displaystyle
		\sum_{j=1}^{N_p}
		w_j u_\theta(z_j)
		}.
		\label{eq:weak_loss_importance_discrete}
		\end{equation}

		\begin{remark}
		\label{subsec:scale_anchor}
        The physics losses chosen in two cases are invariant under positive rescaling of the density. Indeed, for any constant $c>0$,
        \[
        \Lweak(cu)=\Lweak(u),
        \]
        thanks to $D\nabla(cu)-\bm b(cu)=c(D\nabla u-\bm b u)$. This scale-invariant construction prevents the physics loss from being artificially reduced by simply decreasing the overall amplitude of $u$, thereby removing the dependence of the physics loss on the global scale of \(u\). We emphasize that this does not enforce the probability normalization $\int_\Omega u_\theta\,\mathrm dx=1$; the scale invariance applies only to the physics loss, while the scale of \(u_\theta\) is determined by the MC data pretraining.
\end{remark}

In Appendix~\ref{app:theory}, we derive error estimates
and prove convergence of the quadrature approximations
for the minimization problems with objective
$\mathcal L_{\rm W}(u)+\lambda_{\rm MC}\mathcal L_{\rm MC}(u)$
introduced in Section~\ref{sec:formulation}
for Cases~1 and~2.
        
		\section{Learning Methods}
		\label{sec:method}
With the mathematical framework established in Section \ref{sec:formulation}, we now present the three-stage learning procedure illustrated in Figure~\ref{fig:flowchart}. The following subsections detail the training strategy for each stage and the high-dimensional physics-sampling scheme.

		\begin{figure}[!htbp]
		\centering
		\resizebox{\textwidth}{!}{%
		\begin{tikzpicture}[
		 node distance=1.55cm,
		 box/.style={draw,rounded corners,align=center,minimum width=4.0cm,minimum height=1.25cm,fill=gray!8},
		 arrow/.style={-{Latex[length=3mm]},thick}
		]
		\node[box] (mc) {generate MC histogram};
		\node[box,right=of mc] (A) {Stage A\\MC data pretraining};
		\node[box,right=of A] (B) {Stage B\\data and physics Adam optimization};
		\node[box,right=of B] (C) {Stage C\\L-BFGS fine-tuning};
		\draw[arrow] (mc)--(A);
		\draw[arrow] (A)--(B);
		\draw[arrow] (B)--(C);
		\end{tikzpicture}%
		}
		\caption{Flowchart of the three-stage training procedure.}
		\label{fig:flowchart}
		\end{figure}
		
		\subsection{Neural Network Architecture and Monte Carlo Setup}
        Let $g_\theta(x)$ denote the raw network output. For  low-dimensional   problems, we use fully connected Tanh networks with
\[
u_\theta(x)=\operatorname{softplus}(g_\theta(x))+10^{-12}.\]
For high-dimensional problems, we instead use the log-density parameterization
\[
u_\theta(x)=
\exp\!\left[
\operatorname{clip}
\bigl(g_\theta(x),\ell_{\min},\ell_{\max}\bigr)
\right]+10^{-12},
\]
where $\ell_{\min}$ and $\ell_{\max}$ denote the prescribed lower and upper
clipping bounds for the network-predicted log-density, respectively.
The clipping is introduced solely for numerical stabilization, preventing
underflow or overflow in the exponential map when $g_\theta(x)$ takes
excessively small or large values. where the network learns an approximation to $\log u_\theta$, and the exponential map restores a positive density.

For the PINN baselines trained without MC data, the constraint \(\int_\Omega u_\theta\,\mathrm dx=1\) is imposed to fix the density scale, and the boundary loss is retained. For methods incorporating MC data, the density scale is determined by the data term in \eqref{eq:data_mse}.

        The raw MC samples are first accumulated into a regular histogram. To represent both high- and low-density regions, $N_{\rm ref}$ distinct reference bins are selected using a mixture of density-weighted and uniform sampling. Specifically, one subset is sampled according to the histogram density, while the other is sampled uniformly over all bins. The centers of the selected bins, together with their histogram densities, form the reference data set $\mathcal D_{\rm MC}
		=\{(x_i,\widehat m_i)\}_{i=1}^{N_{\rm ref}}$, and the fixed scale $s_{\rm{data}}$ for the data term is defined as
		\begin{equation}
		s_{\rm data}
		=\frac{1}{N_{\rm ref}}\sum_{i=1}^{N_{\rm ref}}\widehat m_i^2+\varepsilon_s,
		\qquad
		\widetilde{\Ldata}=\frac{\Ldata}{s_{\rm data}}.
	\label{eq:data_scale}
		\end{equation}
   Here, \(s_{\rm data}\) scales the MC data loss relative to the magnitude of the reference density, while \(\varepsilon_s>0\) prevents division by zero.
   
		We mention that since \(s_{\rm data}\) is fixed throughout training, this rescaling does not alter the minimizer of the data-fitting objective, but facilitates balancing the data and physics losses in Stages B-C.

The physics training pool $\mathcal P_{\rm train}$ is fixed throughout Stage B and sampled in mini-batches for gradient updates. An independent monitoring set $\mathcal P_{\rm mon}$, generated with a different random seed, is used for checkpoint selection and does not participate in training. For high-dimensional problems, importance sampling is used for the physics points as described in Subsection~\ref{subsec:8d_importance}.        
			
	In detail, after Stage B, the checkpoint with the lowest monitoring score is selected. The Adam-only method returns this checkpoint, whereas the Adam+L-BFGS method uses it to initialize Stage C. Stage C reuses the training and monitoring points from Stage B without resampling.  The MC data sampling and density construction procedure is summarized in Algorithm~\ref{alg:mc_reference}.

		\begin{algorithm}[H]
		\caption{Monte Carlo reference density construction}
		\label{alg:mc_reference}
		\footnotesize
		\begin{algorithmic}[1]
		\Require Drift $b$, diffusion $D$, domain $\Omega$, path number $N_{\rm path}$, time step $\Delta t$, burn-in time $T_{\rm burn}$, sampling time $T_{\rm sample}$, saving interval $\Delta t_{\rm save}$, histogram grid $\mathcal G_{\rm hist}$, reference count $N_{\rm ref}$, density fraction $\rho$, seed
		\Ensure Monte Carlo data set $\mathcal D_{\rm MC}$ and data scale $s_{\rm data}$
		\State Draw $N_{\rm path}$ initial particles and evolve them by Euler--Maruyama through the burn-in and sampling intervals
		\State After burn-in, save particle positions every $\Delta t_{\rm save}$ to form the stationary sample set $\mathcal S$
		\State Bin $\mathcal S$ on $\mathcal G_{\rm hist}$, convert counts to densities $\widehat m_j$, and normalize the discrete mass
		\State Set $N_\rho=\operatorname{round}(\rho N_{\rm ref})$ and $N_u=N_{\rm ref}-N_\rho$
		\State Sample $N_\rho$ boxes without replacement with probabilities proportional to $\widehat m_j+\varepsilon_{\rm ref}$, where $\varepsilon_{\rm ref}>0$ is a small numerical safeguard
		\State Sample $N_u$ boxes uniformly from the histogram grid
		\State Merge the two selections and, if necessary, add unsampled boxes until $N_{\rm ref}$ unique reference boxes are obtained
		\State Form $\mathcal D_{\rm MC}=\{(x_i,\widehat m_i)\}_{i=1}^{N_{\rm ref}}$ from the selected box centers and densities
		\State Set $s_{\rm data}=N_{\rm ref}^{-1}\sum_i\widehat m_i^2+\varepsilon_s$
		\State \Return $\mathcal D_{\rm MC}$ and $s_{\rm data}$
		\end{algorithmic}
		\end{algorithm}
		
		\subsection{Stage A: Monte Carlo Data Pretraining}

   Stage A provides a data-informed initialization of the density scale and dominant spatial structure before incorporating the Fokker-Planck constraint. The network is trained on the scaled data loss \(\widetilde{\Ldata}\) using AdamW with cosine-annealed learning rates and gradient clipping. A small weight decay of \(\lambda=10^{-9}\) is used. Training is performed over shuffled data mini-batches, and the parameters with the lowest data-fitting loss are retained as the Stage-A solution \(\theta_A\).  The detail of MC data pretraining is shown in Algorithm \ref{alg:stage_a}.

		\begin{algorithm}[H]
		\caption{MC data pretraining}
		\label{alg:stage_a}
		\footnotesize
		\begin{algorithmic}[1]
		\Require Initial parameters $\theta_0$, data set $\mathcal D_{\rm MC}$, data scale $s_{\rm data}$, epoch number $K_A$, batch size $B_d$, AdamW learning-rate schedule, clipping threshold $G_A$, check interval $K_{\rm check}^{A}$
		\Ensure Stage-A parameters $\theta_A$
		\State Set $\theta=\theta_0$; initialize AdamW and register the initial best checkpoint
		\For{$e=1,\ldots,K_A$}
		    \State Shuffle $\mathcal D_{\rm MC}$ and divide it into mini-batches
		    \For{each data batch $(X_b,Y_b)$}
		        \State Evaluate the scaled data loss $\widetilde{\mathcal L}_{\rm data}(\theta;X_b,Y_b)$
		        \State Backpropagate, clip the gradient norm to $G_A$, and take one AdamW step
		    \EndFor
		    \State Advance the prescribed learning-rate schedule
		    \If{$e$ is a check epoch or $e=K_A$}
		        \State Evaluate the full-data fitting error on $\mathcal D_{\rm MC}$ and save the checkpoint if it improves
		        \State Stop if the prescribed early-stopping rule is satisfied
		    \EndIf
		\EndFor
		\State Load the best Stage-A checkpoint
		\State \Return $\theta_A$
		\end{algorithmic}
		\end{algorithm}
		
		\subsection{Stage B: Joint Optimization: Variational Physics and MC Data Losses}

Stage B starts from $\theta_A$, with fixed and independent physics training and monitoring sets, $\mathcal P_{\rm train}$ and $\mathcal P_{\rm mon}$.  Sobol points are used for the low-dimensional  examples, whereas the high-dimensional  examples use the importance sampling described above. At $\theta_A$, we define the reference loss scales
\[
s_d=\max\{\widetilde{\Ldata}(\theta_A),\varepsilon_s\},
\qquad
s_p=\max\{\Lphys(\theta_A;\mathcal P_{\rm mon}),\varepsilon_s\}.
\]
Here, the fixed scales \(s_d\) and \(s_p\) normalize the data and physics losses relative to their values at the start of Stage B.

        The physics and data terms are optimized alternately using separate Adam optimizers with a prescribed update ratio. Physics points are cycled through the fixed training pool, while the MC data are reshuffled after each pass.
	
	We mention that checkpoint selection uses the monitoring score
		\begin{equation}
		S_B(\theta)
		=w_d\frac{\widetilde{\Ldata}(\theta)}{s_d}
		+w_p\frac{\Lphys(\theta;\mathcal P_{\rm mon})}{s_p},
\label{eq:stageB_score}
		\end{equation}
        where $s_d$ and $s_p$ scale the data and physics losses relative to their Stage-B initial values, and $w_d$ and $w_p$ specify their relative weights. The monitoring score is evaluated only at prescribed check times and does not participate in gradient updates. Since both components measure errors, a smaller $S_B$ indicates a better balance between data fitting and physics consistency.
		\begin{equation}
		k_B^\star=\arg\min_{k\in\mathcal K_B}S_B(\theta^{(k)}),
		\qquad
		\theta_B=\theta^{(k_B^\star)}.
		\label{eq:stageB_argmin}
		\end{equation}
 With fixed positive weights, a smaller \(S_B\) corresponds to a lower weighted data--physics error, the Stage B initial state is included in $\mathcal K_B$, ensuring that an updated state is selected only if it achieves a lower monitoring score.   See Algorithm \ref{alg:stage_b} for the details of the joint optimization.   
		\begin{algorithm}[H]
		\caption{Stage B: joint optimization: variational physics and MC data losses}
		\label{alg:stage_b}
		\footnotesize
		\begin{algorithmic}[1]
		\Require $\theta_A$, $\mathcal D_{\rm MC}$, $s_{\rm data}$, physics pool $\mathcal P_{\rm train}$, monitor set $\mathcal P_{\rm mon}$, physics loss $\mathcal L_{\rm phys}$, cycle number $K_B$, update counts $n_p,n_d$, batch sizes $B_p,B_d$, optimizer schedules, check interval $K_{\rm check}^{B}$, weights $w_p,w_d$
		\Ensure Stage-B parameters $\theta_B$ and the unchanged sets $\mathcal P_{\rm train},\mathcal P_{\rm mon}$
		\State Set $\theta=\theta_A$ and compute the initial scaled-data and monitored-physics losses $L_d^0,L_p^0$
		\State Fix $s_d=\max(L_d^0,\varepsilon_s)$ and $s_p=\max(L_p^0,\varepsilon_s)$ for the whole stage
		\State Initialize separate Adam optimizers for the data and physics objectives and register $\theta_A$ as the cycle-$0$ candidate
		\For{$k=1,\ldots,K_B$}
		    \For{$r=1,\ldots,n_p$}
		        \State Take a mini-batch from $\mathcal P_{\rm train}$ and perform one Adam update using $\mathcal L_{\rm phys}/s_p$
		        \State For the pure Strong PINN baseline, include the prescribed scaled boundary term
		    \EndFor
		    \For{$r=1,\ldots,n_d$}
		        \State Take a mini-batch from $\mathcal D_{\rm MC}$ and perform one Adam update using $\widetilde{\mathcal L}_{\rm data}$
		    \EndFor
		    \If{$k$ is a check cycle or $k=K_B$}
		        \State Evaluate $L_d^{\rm chk}$ on $\mathcal D_{\rm MC}$ and $L_p^{\rm chk}$ on the independent set $\mathcal P_{\rm mon}$
		        \State Compute $S_B=w_dL_d^{\rm chk}/s_d+w_pL_p^{\rm chk}/s_p$ and save the checkpoint if $S_B$ improves
		        \State Stop if the prescribed early-stopping rule is satisfied
		    \EndIf
		\EndFor
		\State Load the checkpoint with the smallest monitoring score
		\State \Return $\theta_B$, $\mathcal P_{\rm train}$, and $\mathcal P_{\rm mon}$
		\end{algorithmic}
		\end{algorithm}
		
		\subsection{Stage C: L-BFGS Fine-Tuning}
        Stage C is applied only to the ``L-BFGS” methods and starts from the Stage B solution \(\theta_B\), while Adam-only methods terminate at Stage B. The same physics training and monitoring sets are retained, so the two variants differ only by the additional L-BFGS fine-tuning.

    Let \(\mathcal P_C\subseteq\mathcal P_{\rm train}\) denote the physics points used in Stage C, with the full training pool used in the low-dimensional cases and a fixed subset used in the high-dimensional case. At the Stage-C initial state \(\theta_B\), we define
\(s_d^C=\max\{\widetilde{\Ldata}(\theta_B),\varepsilon_s\},
s_p^C=\max\{\Lphys(\theta_B;\mathcal P_C),\varepsilon_s\}.\)
L-BFGS then minimizes
\begin{align}\label{eq:lbfgs}
\mathcal L_{\rm LBFGS}(\theta)=
\alpha\frac{\widetilde{\Ldata}(\theta)}{s_d^C}
+
\beta\frac{\Lphys(\theta;\mathcal P_C)}{s_p^C}.
\end{align}

        The data and physics losses may differ substantially in numerical magnitude. Scaling them by \(s_d^C\) and \(s_p^C\) places the two terms on comparable relative scales, allowing \(\alpha\) and \(\beta\) to control their contributions to the Stage-C objective.  The scales \(s_d^C\) and \(s_p^C\) are fixed at \(\theta_B\) to balance the numerical magnitudes of the two losses.  Define

		\begin{equation}
		r_d^C(\theta)=\frac{\widetilde{\Ldata}(\theta)}{s_d^C},
		\qquad
		r_p^C(\theta)=\frac{\Lphys(\theta;\mathcal P_C)}{s_p^C}.
		\label{eq:stageC_dimensionless_ratio}
		\end{equation}
		Thus,  \eqref{eq:lbfgs} can be written as
		\begin{equation}
		\mathcal L_{\rm LBFGS}(\theta)
		=\alpha r_d^C(\theta)+\beta r_p^C(\theta).
		\label{eq:lbfgs_ratio_form}
		\end{equation}
        At \(\theta_B\), both ratios are approximately one and subsequently measure the relative change of each loss from its Stage C initial value.
	
        In summary, the scales \(s_d^C\) and \(s_p^C\) are fixed at \(\theta_B\) to balance the numerical magnitudes of the two losses. At \(\theta_B\), the corresponding scaled losses are approximately one and subsequently measure their changes relative to the Stage-C initial state. The weights \(\alpha\) and \(\beta\) then control the relative contributions of the data and physics terms.  The fine-tuning procedure is summarized in Algorithm~\ref{alg:stage_c}.

		\begin{algorithm}[H]
		\caption{Stage C:   L-BFGS fine-tuning}
		\label{alg:stage_c}
		\footnotesize
		\begin{algorithmic}[1]
		\Require $\theta_B$, $\mathcal D_{\rm MC}$, $\mathcal P_{\rm train}$, $\mathcal P_{\rm mon}$, physics loss $\mathcal L_{\rm phys}$, weights $\alpha,\beta$, iteration number $K_C$, check segment length $K_{\rm seg}$
		\Ensure Stage-C parameters $\theta_C$
		\State Set $\theta=\theta_B$ and choose the Stage-C data and physics sets only from the existing $\mathcal D_{\rm MC}$ and $\mathcal P_{\rm train}$
		\State Compute the fixed training scales $s_d^C,s_p^C$ and the corresponding monitoring scales at $\theta_B$
		\State Define the fixed objective $\mathcal L_{\rm LBFGS}=\alpha\,\widetilde{\mathcal L}_{\rm data}/s_d^C+\beta\,\mathcal L_{\rm phys}/s_p^C$
		\State Initialize L-BFGS with strong-Wolfe line search and register $\theta_B$ as the initial candidate
		\While{the L-BFGS iteration budget is not exhausted}
		    \State Run at most $K_{\rm seg}$ L-BFGS iterations on the fixed Stage-C objective
		    \State Evaluate the dimensionless data and physics ratios on the independent monitoring set
		    \State Form $S_C=\alpha r_d+\beta r_p$ and apply the data-protection guard in the high-dimensional case
		    \State Save the current checkpoint if $S_C$ improves and the guard is satisfied
		    \State Stop if the prescribed early-stopping rule is satisfied
		\EndWhile
		\State Load the accepted checkpoint with the smallest monitoring score
		\State \Return $\theta_C$
		\end{algorithmic}
		\end{algorithm}

        The L-BFGS iterations are periodically evaluated on the independent monitoring set using
\begin{equation}
S_C(\theta)
=\alpha\frac{\widetilde{\Ldata}_{,\rm mon}(\theta)}{s_{d,\rm mon}^C} +\beta\frac{\mathcal L_{\mathrm{phys,mon}}(\theta)}{s_{p,\rm mon}^C}. \label{eq:stageC_score} \end{equation}
The initial state is included as a candidate, and the parameters with the lowest monitoring score are retained. For the high-dimensional case, an additional data-error safeguard is imposed to prevent excessive degradation of the MC data fit. In particular, additional sampling treatments are introduced for the high-dimensional case; see Remark~\ref{subsec:8d_importance} for details.
		\begin{remark}
		\label{subsec:8d_importance}
		In high dimensions, uniform sampling places many physics points in low-density regions. We therefore use a mixture proposal of a local truncated Gaussian and a uniform distribution to improve sampling efficiency. Define \(q\) as the proposal density for sampling physics points, then we decompose
		\begin{equation}
		q(x)=a q_G(x)+(1-a)q_U(x),
		\qquad q_U(x)=|\Omega|^{-1},
		\end{equation}
        where \(q_G\) denotes the truncated Gaussian density, \(q_U\) is the uniform density over \(\Omega\), and \(a\in[0,1]\) is the mixture weight.  Specifically, its standard deviation along the \(i\)-th coordinate is set as
\[
\sigma_{q,i}=c_q\sqrt{\frac{D}{\lambda_i}},
\]
where $\lambda_i$ characterizes the drift strength near the linear equilibrium point and $c_q$ controls the proposal width. In the high-dimensional implementation, we set $a=0.70$ and $c_q=1.20$.  

We emphasize that since the physics points are sampled from the nonuniform proposal density \(q\), importance weights are introduced to correct the spatial integrals. For \(z_j\sim q\), define
\(w_j=\frac{1}{|\Omega|q(z_j)}.\)
Then, for an integrable function \(f\),
\[	
\int_\Omega f(x)\,\mathrm dx
\approx
\frac{|\Omega|}{N_p}
\sum_{j=1}^{N_p}w_jf(z_j).
\]
The importance correction is applied to all spatial integrals in the physics losses, reducing the variance associated with finite sampling. The detailed procedure is given in the following algorithm.

	\begin{algorithm}[H]
		\caption{High-dimensional importance sampling for physics points}
		\label{alg:importance}
		\footnotesize
		\begin{algorithmic}[1]
		\Require Point count $N$, box domain $\Omega=\prod_{i=1}^{d}[l_i,h_i]$, diffusion $D$, linear scales $\{\lambda_i\}_{i=1}^{d}$, Gaussian fraction $a$, scale factor $c_q$, seed
		\Ensure Weighted physics points $\{(x_j,w_j)\}_{j=1}^{N}$
		\State Set $\sigma_{q,i}=c_q\sqrt{D/\lambda_i}$ and split $N$ into truncated-Gaussian and uniform proposal counts
		\State Generate the truncated-Gaussian component by mapping scrambled Sobol points through the coordinatewise truncated-Gaussian inverse CDF
		\State Generate the uniform component by scrambled Sobol sampling over $\Omega$ and merge the two point sets
		\For{each generated point $x_j$}
		    \State Evaluate $q_G(x_j)$ and the mixture density $q(x_j)=a q_G(x_j)+(1-a)/|\Omega|$
		    \State Set the integration weight $w_j=1/(|\Omega|q(x_j))$
		\EndFor
		\State Apply a deterministic shuffle so that mini-batches mix the two proposal components
		\State \Return $\{(x_j,w_j)\}_{j=1}^{N}$
		\end{algorithmic}
		\end{algorithm}
		
		\FloatBarrier

\end{remark}

		\section{Numerical Experiments}
		\label{sec:numerics}
	In this section, we present numerical examples illustrating the implementation of the proposed algorithm.
    All experiments use the mean absolute percentage error (MAPE) over the effective-density region as the primary evaluation metric. Let \(m_\star\) denote an independent reference density. To exclude the near-zero tails, we set the threshold \(\tau=0.01\) and define
		\begin{equation}
		\mathcal I_\tau
		=\left\{i:m_\star(x_i)>\tau\max_j m_\star(x_j)\right\},
		\qquad \tau=0.01,
		\end{equation}
		\begin{equation}
		\mathrm{MAPE}_{\rm eff}
		=\frac{100\%}{|\mathcal I_\tau|}
		\sum_{i\in\mathcal I_\tau}
		\frac{|u_\theta(x_i)-m_\star(x_i)|}{m_\star(x_i)+10^{-14}}.
		\label{eq:mape}
		\end{equation}
        This metric measures the relative density error while avoiding large percentage errors in near-zero-density regions.

        We conduct an ablation study using seven comparison methods: Data-only NN, Strong PINN, Weak/Integral PINN, Strong+MC (Adam), Weak/Integral+MC (Adam), Strong+MC+L-BFGS, and Weak/Integral+MC+L-BFGS. The analytical stationary density is used only for final evaluation.  Detailed results are discussed for each example, with the complete MAPE comparison across all six experiments summarized in Section~\ref{subsec:cross_summary}.
		
		\subsection{Double-Well Potential in Two Dimensions}
		
		\paragraph{Model and Experimental Setup.}
		Consider
		\begin{equation}
		\bm b(x,y)=\begin{pmatrix}\alpha x-\beta x^3\\-\gamma y\end{pmatrix},
		\end{equation}
        the corresponding potential is
		\begin{equation}
		V(x,y)=\frac{\beta}{4}x^4-\frac{\alpha}{2}x^2+\frac{\gamma}{2}y^2.
		\end{equation}
        We set \(\alpha=\beta=\gamma=2\) and \(D=0.125\) on the computational domain \([-2,2]\times[-1.2,1.2]\). The MC reference is generated by Euler--Maruyama simulation with \(25{,}000\) trajectories and \(\Delta t=10^{-3}\). After a burn-in time of \(8\), states are sampled every \(0.1\) over the subsequent \(2\) time units, yielding \(500{,}000\) stationary samples. A histogram with bin width \(0.04\) is constructed, from which \(2{,}000\) reference bins are selected using \(70\%\) density-weighted and \(30\%\) uniform sampling. The network consists of four Tanh hidden layers of width \(128\) with a softplus output. The resulting MC histogram has an effective-region MAPE of \(9.85\%\) against the analytical stationary density, indicating the finite-sample error in the MC reference.

        For data-assisted methods, Stage A uses AdamW for up to \(1{,}200\) epochs with an initial learning rate of \(10^{-3}\) and cosine annealing. Stage B uses \(800\) physics training points and \(200\) monitoring points, with a mini-batch size of \(256\). One physics update and one data update are performed per cycle for up to \(6{,}000\) cycles, with initial learning rates \(1.2\times10^{-4}\) and \(6\times10^{-5}\), respectively. For the L-BFGS variants, Stage C runs for up to \(800\) iterations.
		
		\begin{figure}[!htbp]
		\centering
		\safeincludegraphics[width=0.98\textwidth,height=0.43\textheight,keepaspectratio]{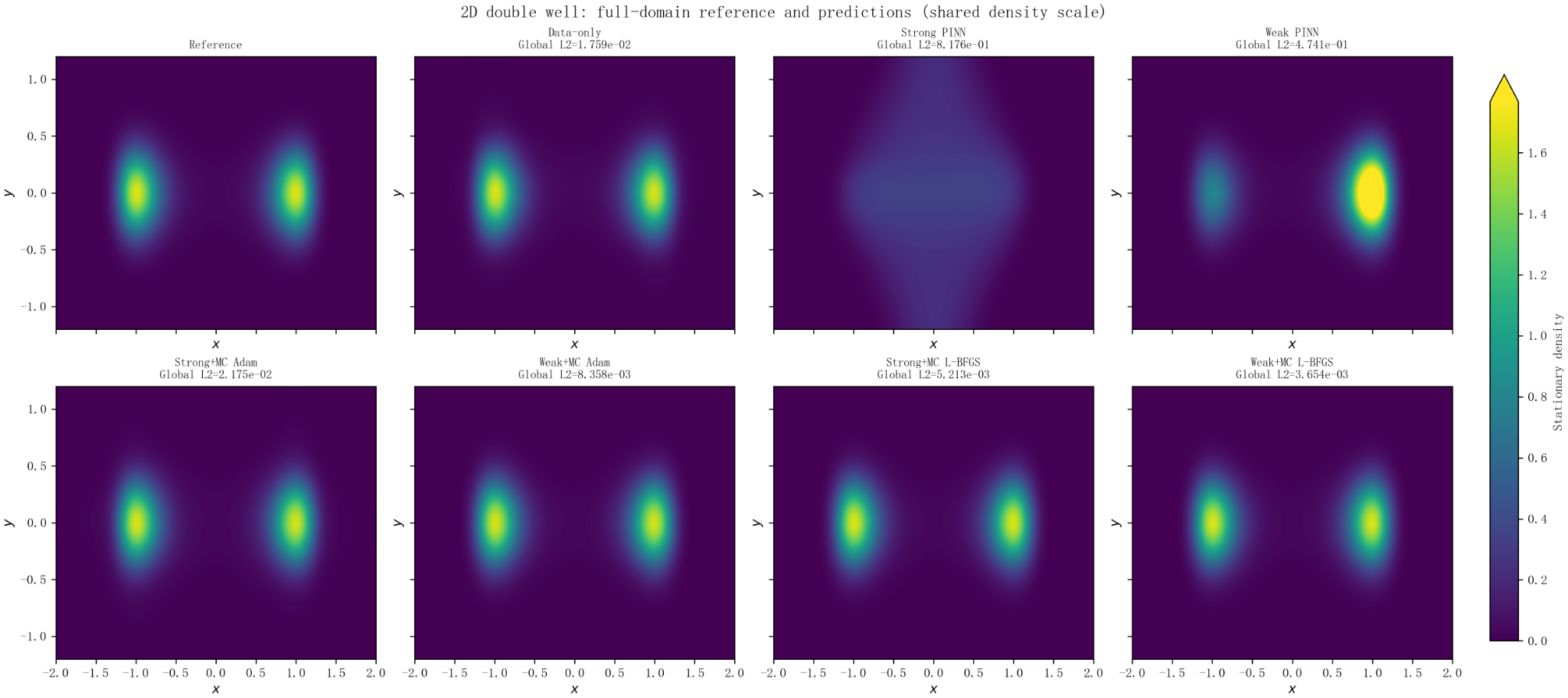}
		\caption{Two-dimensional double-well potential: analytical stationary density and full-domain predictions of the seven methods.}
		\label{fig:double-well-density}
		\end{figure}
		
		\paragraph{Results and Discussion}
		The reference density, the predictions of the seven methods, and the corresponding error distributions are shown in Figures.~\ref{fig:double-well-density}--\ref{fig:double-well-rel}; the method-level MAPE values are summarized in Figure~\ref{fig:double-well-mape}.

        Weak+MC+L-BFGS achieves the lowest MAPE of \(0.3452\%\). Along the three-stage procedure, the error decreases from \(7.2102\%\) for Data-only to \(1.2187\%\) after Stage B and further to \(0.3452\%\) after L-BFGS refinement, demonstrating the successive improvement from physics-informed training and Stage-C fine-tuning.

        The final Weak+MC+L-BFGS result also outperforms Strong+MC+L-BFGS (\(1.8557\%\)) and the physics-only PINN baselines. Moreover, its error is substantially lower than that of the original MC histogram (\(9.8451\%\)), indicating that incorporating the physics constraint improves upon the finite-sample MC reference.

		\begin{figure}[!htbp]
		\centering
		\safeincludegraphics[width=0.98\textwidth,height=0.43\textheight,keepaspectratio]{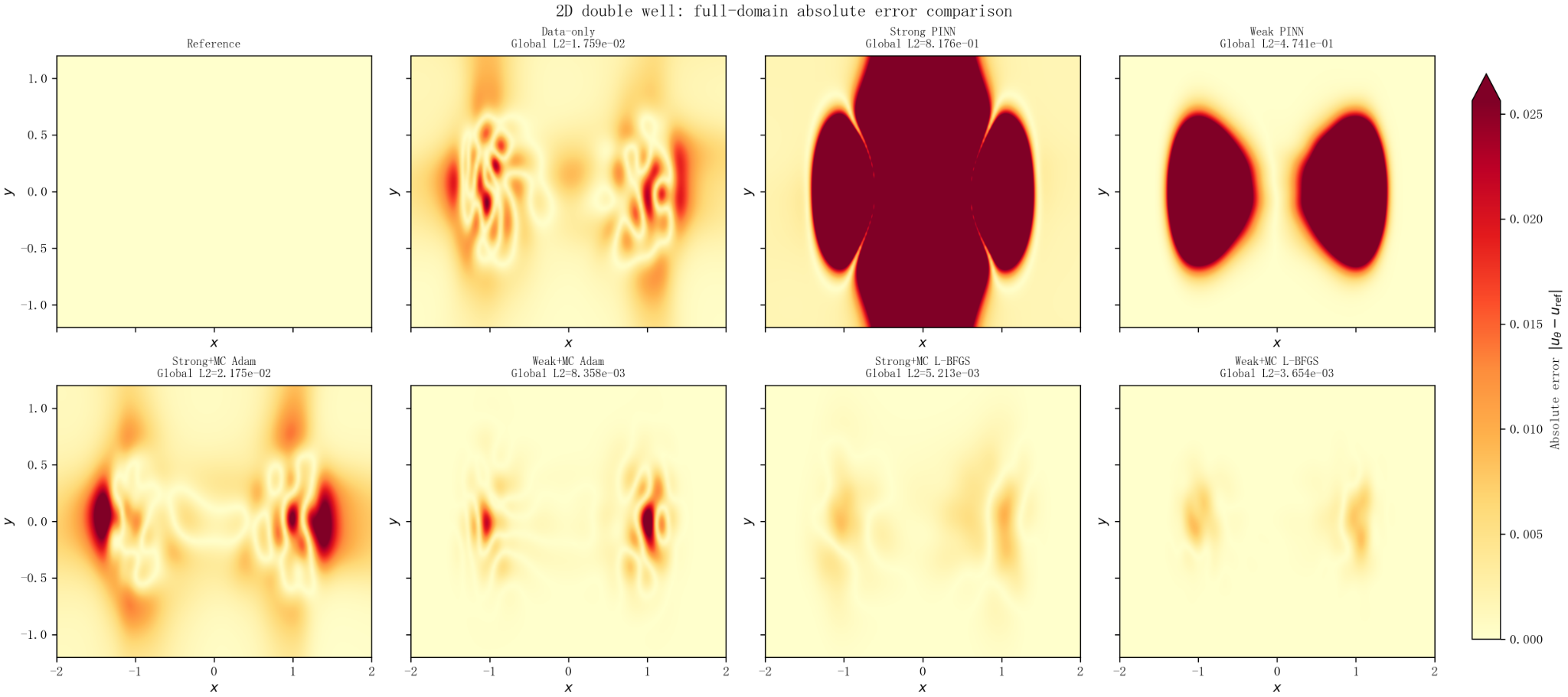}
		\caption{Two-dimensional double-well potential: full-domain absolute-error distributions of the seven methods, evaluated against the analytical reference density.}
		\label{fig:double-well-abs}
		\end{figure}
		
		\begin{figure}[!htbp]
		\centering
		\safeincludegraphics[width=0.98\textwidth,height=0.43\textheight,keepaspectratio]{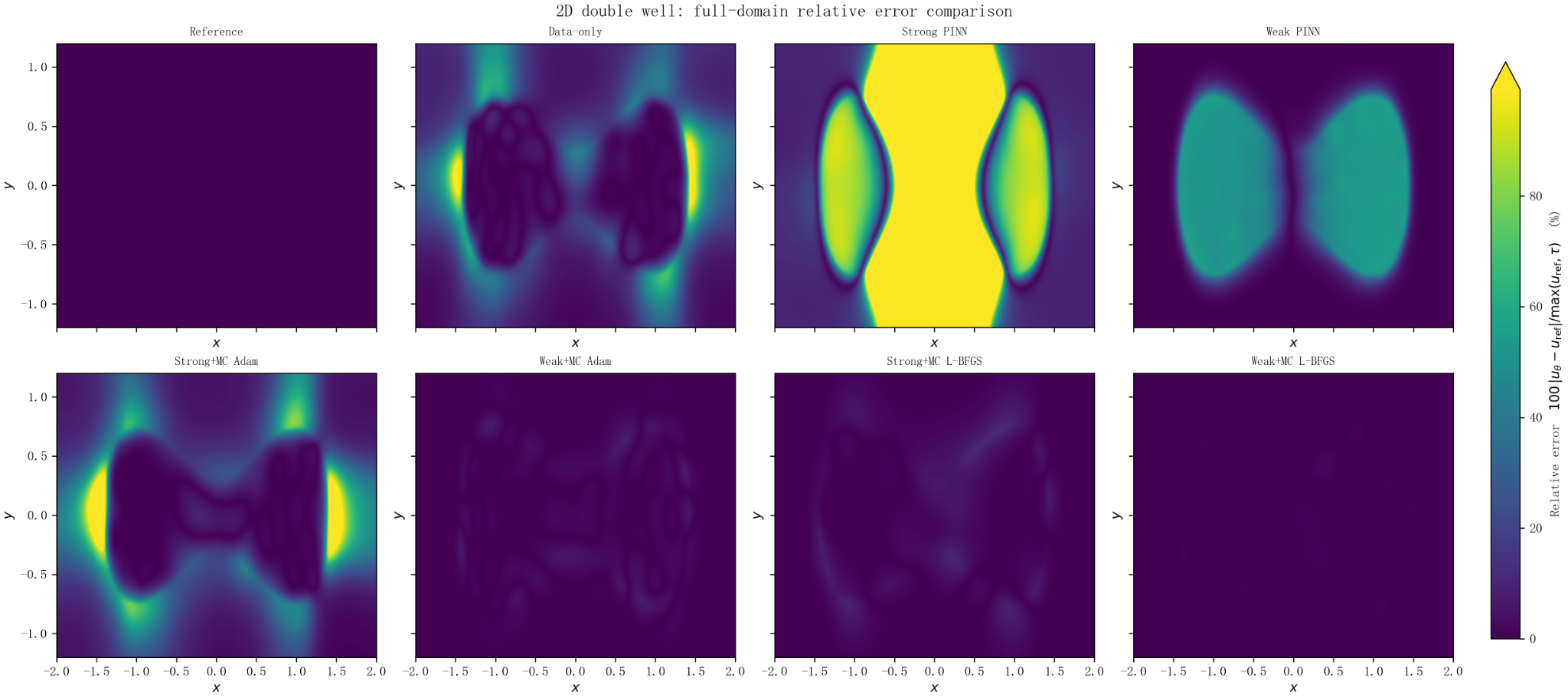}
		\caption{Two-dimensional double-well potential: full-domain pointwise relative-error distributions of the seven methods, evaluated against the analytical reference density.}
		\label{fig:double-well-rel}
		\end{figure}
		
		\begin{figure}[!htbp]
		\centering
		\safeincludegraphics[width=0.86\textwidth,keepaspectratio]{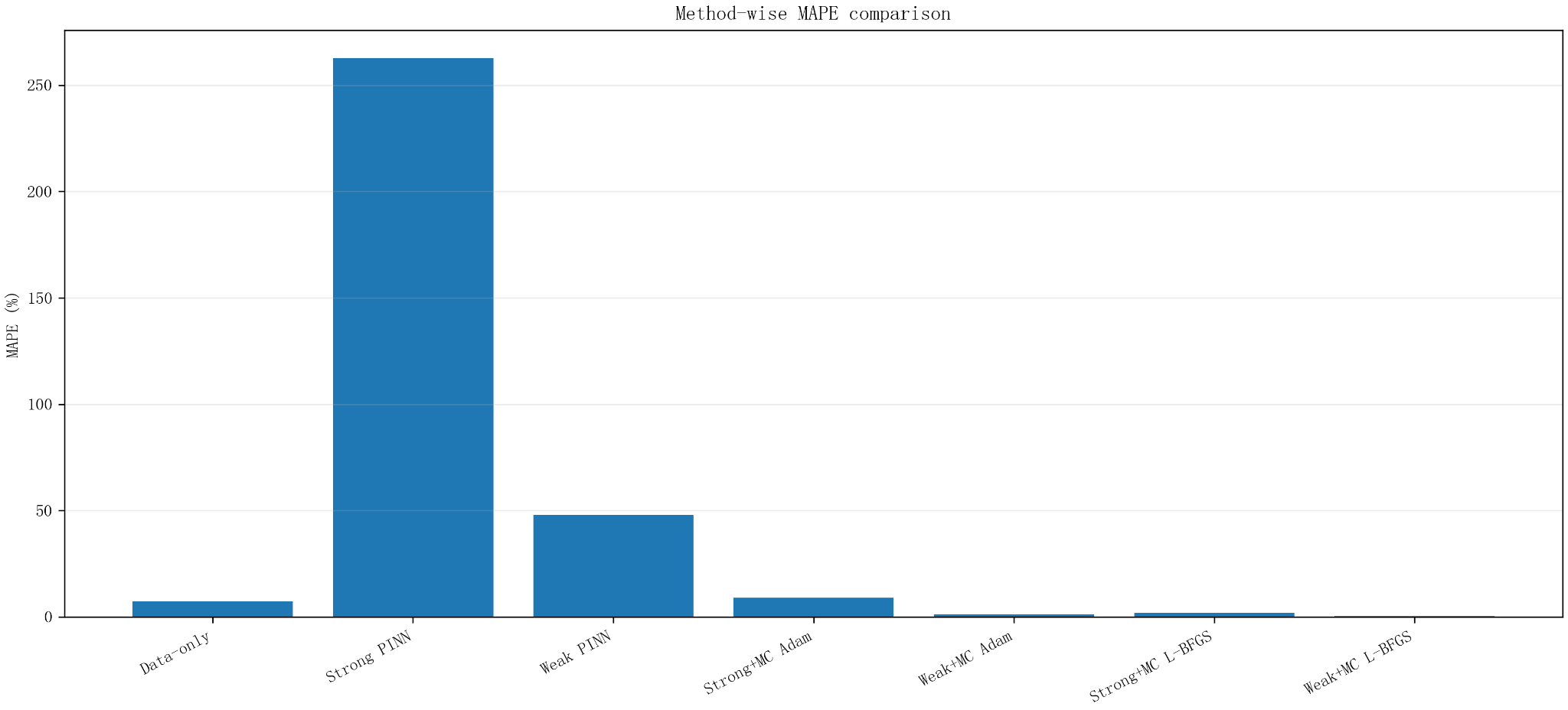}
		\caption{Two-dimensional double-well potential: effective-region MAPE comparison of the seven methods.}
		\label{fig:double-well-mape}
		\end{figure}

        \FloatBarrier
        \Needspace{8\baselineskip}
        \subsection{Rough Double-Well Potential with Orthogonal Rotation in Two Dimensions}
		\label{subsec:rough2d_rotation}
		
		\paragraph{Model and Experimental Setup.}
		Consider the rough potential
		\begin{equation}
		\psi(x,y)
		=
		\frac14(x^2-1)^2+\frac12y^2
		+\varepsilon^4
		\sin\left(\frac{2\pi x}{\varepsilon}\right)
		\sin\left(\frac{2\pi y}{\varepsilon}\right),
		\end{equation}
		where \(\varepsilon=0.1\) controls the oscillation scale. We define
		\begin{equation}
		\bm R
		=
		\nabla^\perp\psi
		=
		\begin{pmatrix}
		\partial_y\psi\\
		-\partial_x\psi
		\end{pmatrix},
		\qquad
		\bm b=-\nabla\psi+\bm R.
		\end{equation}
		By construction,
		\begin{equation}
		\bm R\cdot\nabla\psi=0,
		\qquad
		\nabla\cdot\bm R=0.
		\end{equation}
		With \(D=0.5\), the stationary density satisfies
        \[
        m_\star(x,y)
        \propto
        \exp\left(-\frac{\psi(x,y)}{D}\right).
        \]
        The corresponding probability flux satisfies
        \begin{equation}
        \J[m_\star]=\bm Rm_\star,
        \end{equation}
        and is generally nonzero. This example tests the pointwise-orthogonal extension of the variational formulation in a non-gradient stationary system.

		The computational domain is \([-2,2]^2\). The MC reference is generated using \(16{,}000\) Euler--Maruyama trajectories with time step \(\Delta t=0.002\). After a burn-in time of \(12\), states are sampled every \(0.05\) during the subsequent \(4\) time units, yielding \(1.28\times10^6\) stationary samples. A histogram with bin width \(0.05\) is constructed, and \(2{,}000\) reference bins are used for training. The network architecture is the same as in the preceding two-dimensional example. The MC histogram has an effective-region MAPE of \(9.6156\%\) against the analytical stationary density.
		
        \begin{figure}[!htbp]
		\centering
		\safeincludegraphics[width=0.98\textwidth,height=0.43\textheight,keepaspectratio]{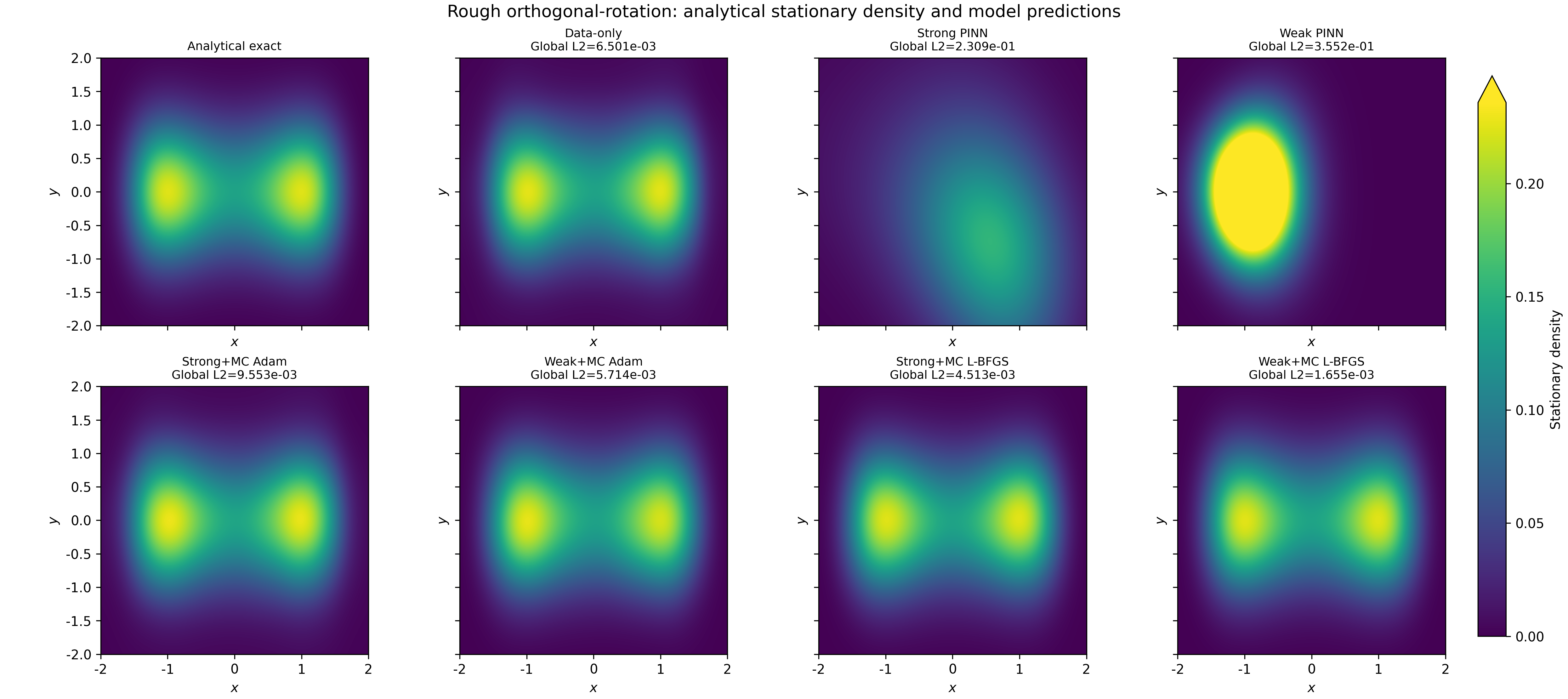}
		\caption{Two-dimensional rough double-well potential with orthogonal rotation: analytical stationary density and full-domain predictions of the seven methods.}
		\label{fig:rough2d-density}
		\end{figure}
		
		\paragraph{Results and Discussion}
		Weak+MC+L-BFGS achieves the lowest effective-region MAPE of \(0.4813\%\), compared with \(3.1981\%\) for Strong+MC+L-BFGS. Along the weak-form training procedure, the MAPE decreases from \(11.3059\%\) for Data-only to \(2.4990\%\) after Stage B and further to \(0.4813\%\) after L-BFGS refinement. The final value is also below the \(9.6156\%\) MAPE of the MC histogram.
		
		\begin{figure}[!htbp]
		\centering
		\safeincludegraphics[width=0.98\textwidth,height=0.43\textheight,keepaspectratio]{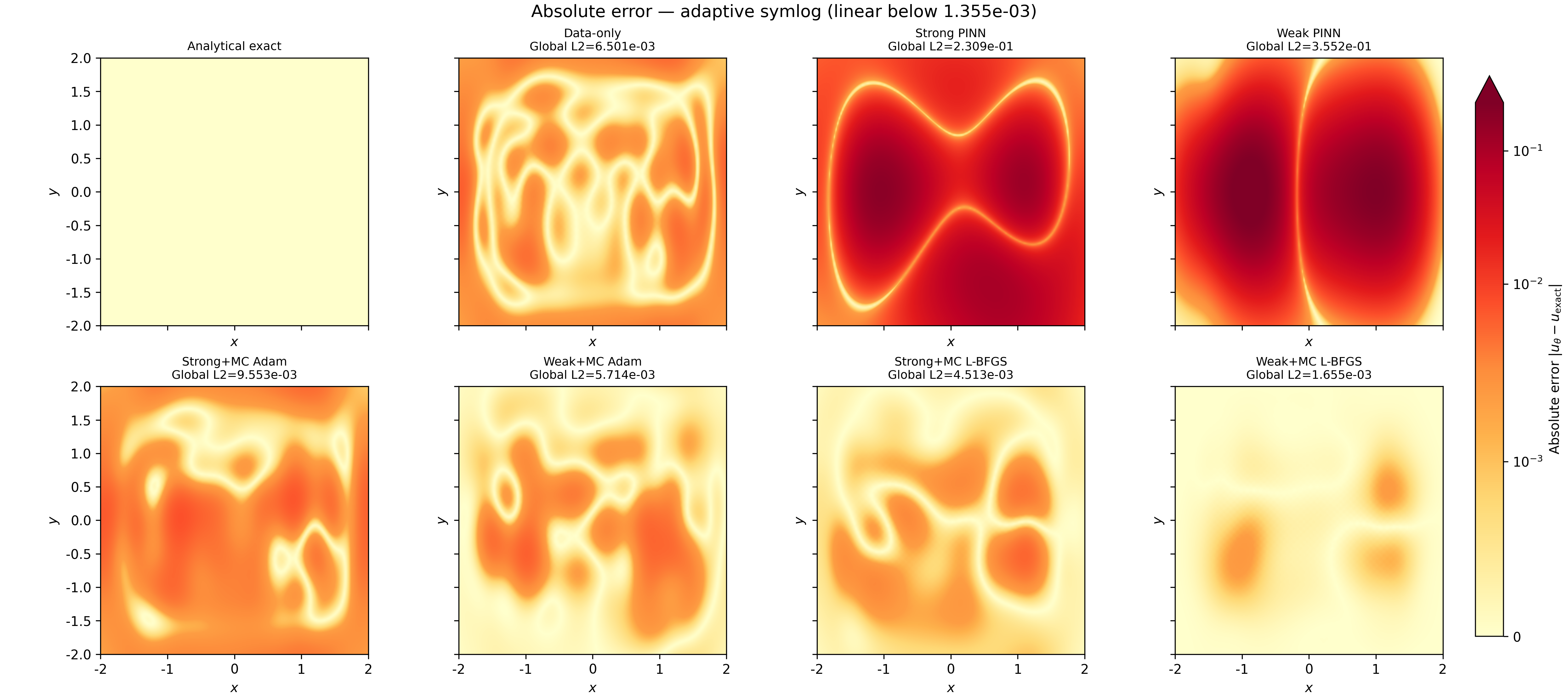}
		\caption{Two-dimensional rough double-well potential with orthogonal rotation: full-domain absolute-error distributions of the seven methods, evaluated against the analytical reference density.}
		\label{fig:rough2d-abs}
		\end{figure}
		
		\begin{figure}[!htbp]
		\centering
		\safeincludegraphics[width=0.98\textwidth,height=0.43\textheight,keepaspectratio]{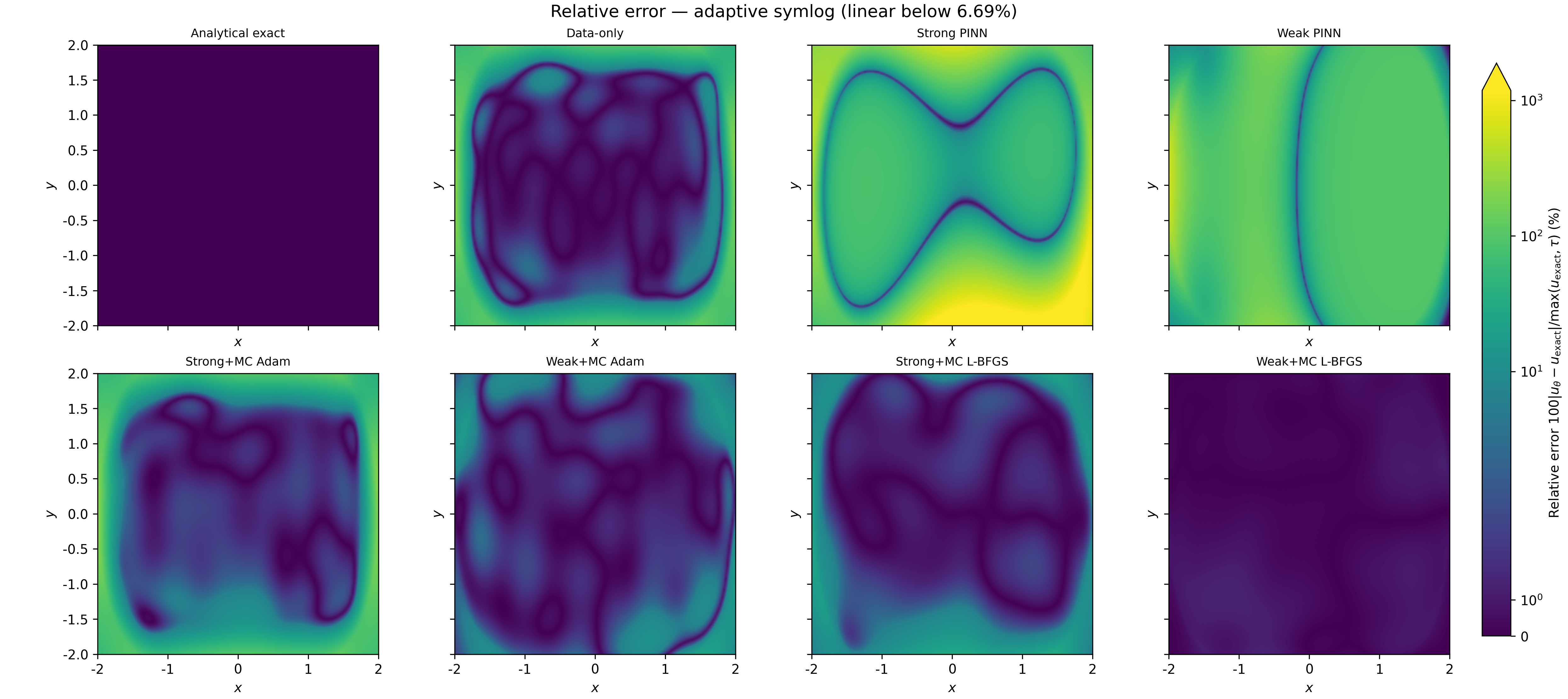}
		\caption{Two-dimensional rough double-well potential with orthogonal rotation: full-domain pointwise relative-error distributions of the seven methods, evaluated against the analytical reference density.}
		\label{fig:rough2d-rel}
		\end{figure}
		
		\begin{figure}[!htbp]
		\centering
		\safeincludegraphics[width=0.86\textwidth,keepaspectratio]{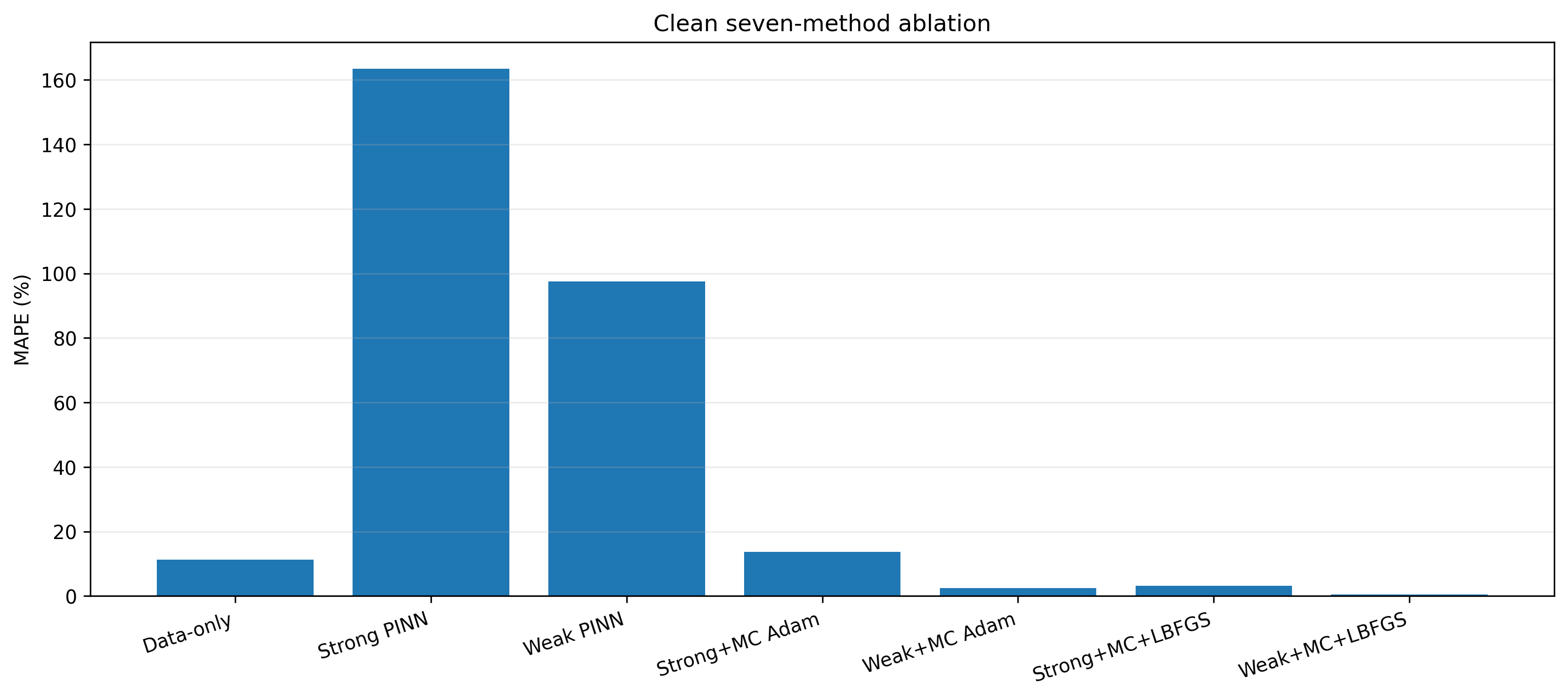}
		\caption{Two-dimensional rough double-well potential with orthogonal rotation: effective-region MAPE comparison of the seven methods.}
		\label{fig:rough2d-mape}
		\end{figure}

		\FloatBarrier
		\Needspace{8\baselineskip}
		\FloatBarrier
\Needspace{8\baselineskip}
\subsection{Four-Dimensional Nonequilibrium Steady State with a Test-Function Family}
\label{subsec:ness4d_test_family}

\paragraph{Model and Experimental Setup.}
Consider the four-dimensional rough potential
\begin{align}
 V(\bm x)
 =&\frac14(x_1^2-1)^2+\frac14(x_2^2-1)^2
 +\frac12x_3^2+\frac12x_4^2
 +\varepsilon^4\prod_{j=1}^{4}
 \sin\left(\frac{2\pi x_j}{\varepsilon}\right),
 \label{eq:ness4d_potential}
\end{align}
and let
\begin{equation}
 f(\bm x)=\frac{\gamma}{k\sqrt{2}}
 \left[\sin(kx_1)\sin(kx_3)+\sin(kx_2)\sin(kx_4)\right],
 \label{eq:ness4d_current_potential}
\end{equation}
where \(\gamma=0.06\) and \(k=8\).  With
\[
 Q=\begin{pmatrix}
 0&1&0&0\\-1&0&0&0\\0&0&0&1\\0&0&-1&0
 \end{pmatrix},
 \qquad Q^{\mathsf T}=-Q,
\]
define
\begin{equation}
 \bm R=Q\nabla f-\frac{f}{D}Q\nabla V,
 \qquad
 \bm b=-\nabla V+\bm R .
 \label{eq:ness4d_drift}
\end{equation}
For the Gibbs density
\(
 m_\star(\bm x)\propto\exp[-V(\bm x)/D]
\), one has
\[
 m_\star\bm R=Q\nabla(fm_\star),
 \qquad
 \nabla\cdot(m_\star\bm R)=0,
\]
and hence
\(\J[m_\star]=m_\star\bm R\) is generally nonzero.  This is a
nonequilibrium steady state and does not require
\(\bm R\cdot\nabla V=0\).

We set \(D=0.125\), \(\varepsilon=0.10\), and use the domain
\([-2,2]^2\times[-1.2,1.2]^2\).  The MC reference is generated with
\(210{,}000\) Euler--Maruyama trajectories, \(\Delta t=10^{-3}\), a burn-in
time of \(8\), and sampling every \(0.1\) over the subsequent \(2\) time
units.  This gives approximately \(4.2\times10^6\) stationary samples.  A
histogram with bin width \(0.2\) is formed and \(10{,}000\) reference bins are
used for training.  The network has four Tanh hidden layers of width \(160\)
and a softplus output.  The weak loss uses the \(K=330\) lowest physical
Dirichlet-Laplacian sine modes
\begin{equation}
 \varphi_{\bm n}(\bm x)
 =\prod_{j=1}^{4}
 \sin\left(\frac{n_j\pi(x_j-a_j)}{L_j}\right),
 \qquad
 \lambda_{\bm n}=\pi^2\sum_{j=1}^{4}\left(\frac{n_j}{L_j}\right)^2,
 \label{eq:ness4d_sine_family}
\end{equation}
with \(L_j=b_j-a_j\), exact \(H_0^1\) normalization, and equal weights
\(\omega_k=1\).  Stage A uses 1300 epochs, Stage B uses 3000 cycles with
three physics updates and one data update per cycle, and Stage C uses 600
L-BFGS iterations.

\begin{figure}[!htbp]
\centering
\safeincludegraphics[
    width=0.98\textwidth,
    height=0.43\textheight,
    keepaspectratio
]{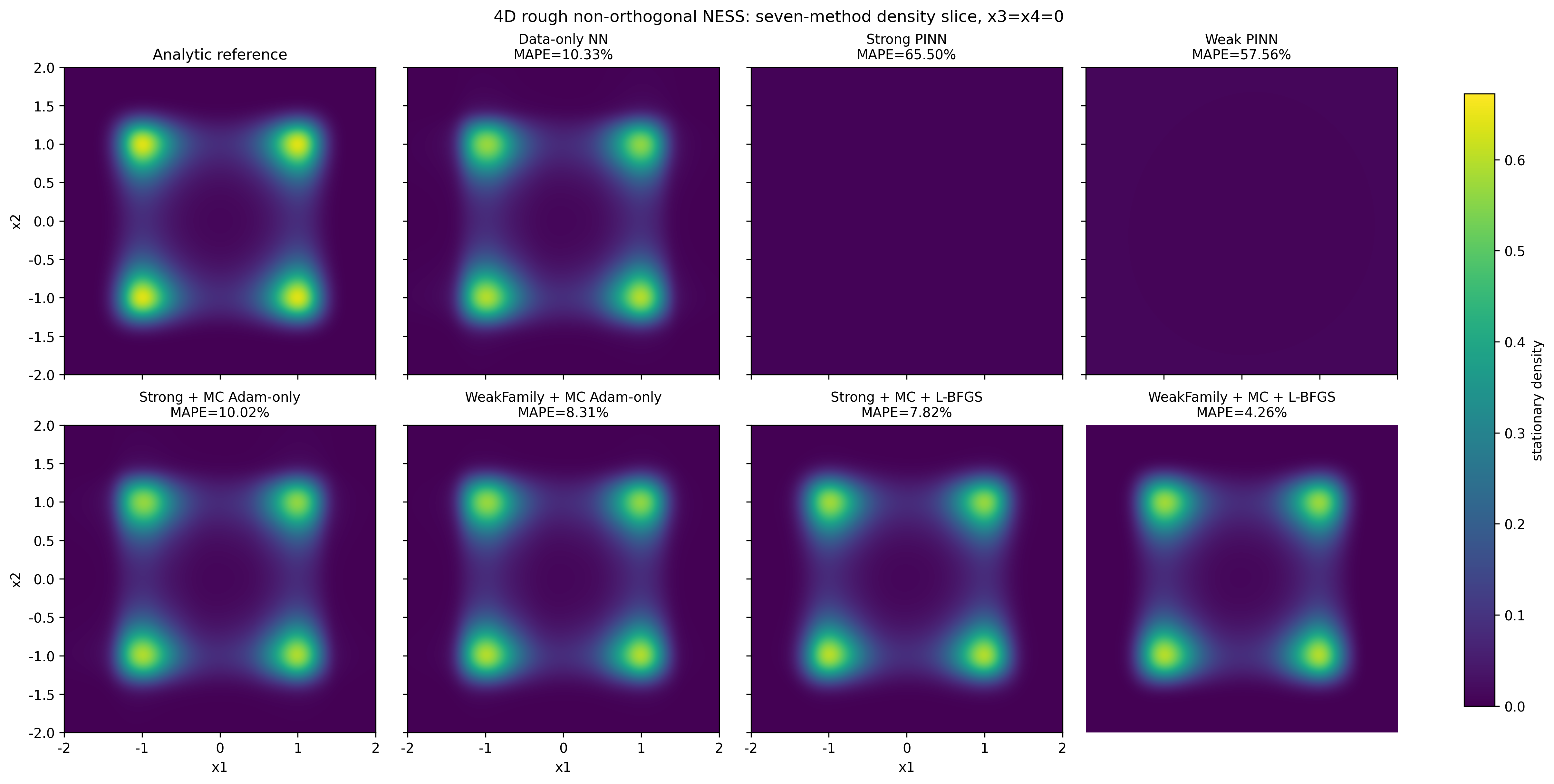}
\caption{Four-dimensional nonequilibrium steady state: reference density and
seven-method predictions on the slice $x_3=x_4=0$.}
\label{fig:ness4d-density}
\end{figure}

\paragraph{Results and Discussion.}
The test-function weak loss gives the lowest MAPE among the seven methods.
WeakFamily+MC+L-BFGS obtains an effective-region MAPE of \(4.2620\%\),
compared with \(7.8237\%\) for Strong+MC+L-BFGS and \(10.3307\%\) for the
Data-only NN.  The corresponding improvement over the strong-form hybrid is
\(3.5617\) percentage points.  The result shows that the stationary-current
balance can be imposed through a fixed test-function family while retaining
the MC data anchor and the three-stage training procedure.

\begin{figure}[!htbp]
\centering
\safeincludegraphics[
    width=0.98\textwidth,
    height=0.43\textheight,
    keepaspectratio
]{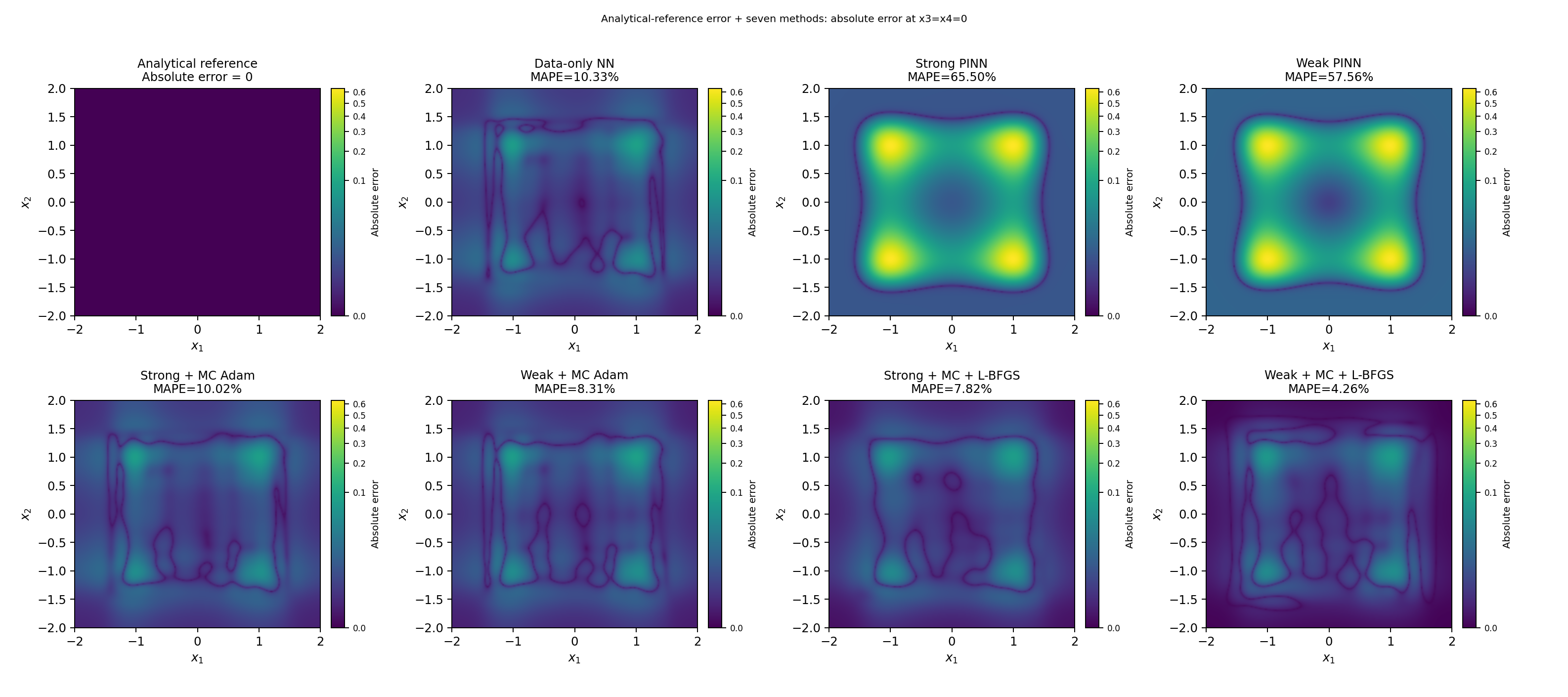}
\caption{Four-dimensional nonequilibrium steady state: absolute-error
distributions on the slice $x_3=x_4=0$.}
\label{fig:ness4d-abs}
\end{figure}

\begin{figure}[!htbp]
\centering
\safeincludegraphics[
    width=0.98\textwidth,
    height=0.43\textheight,
    keepaspectratio
]{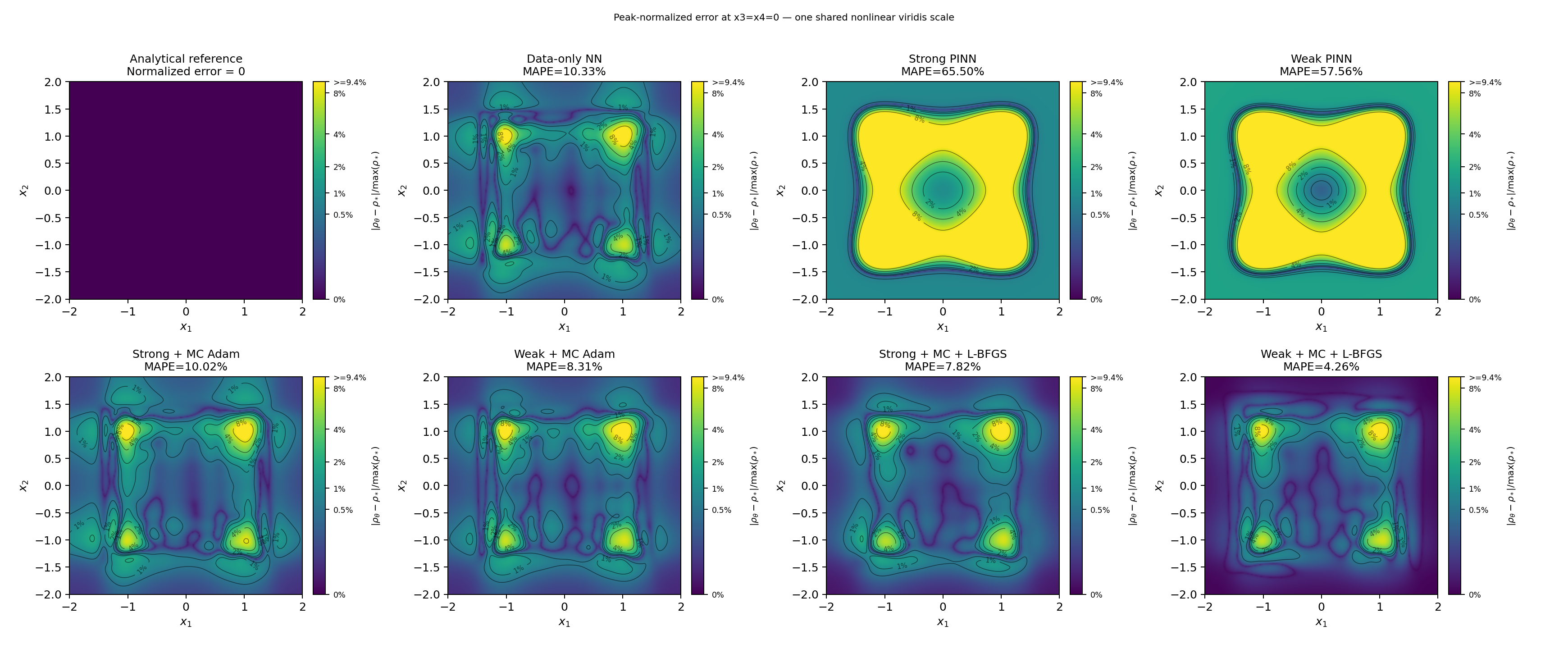}
\caption{Four-dimensional nonequilibrium steady state: pointwise
relative-error distributions on the slice $x_3=x_4=0$.}
\label{fig:ness4d-rel}
\end{figure}

\begin{figure}[H]
\centering
\safeincludegraphics[
    width=0.86\textwidth,
    keepaspectratio
]{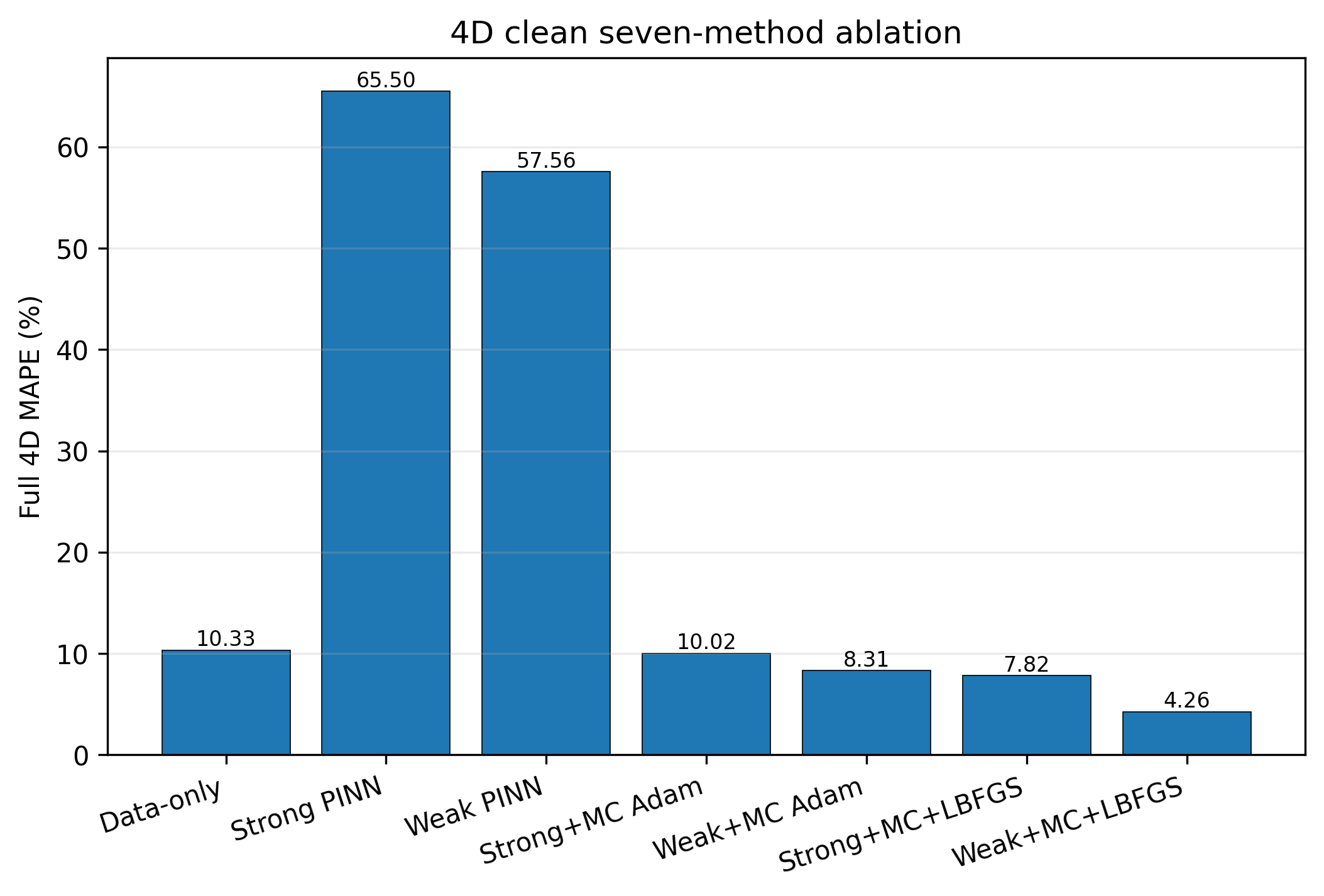}
\caption{Four-dimensional nonequilibrium steady state: effective-region MAPE
comparison of the seven methods.}
\label{fig:ness4d-mape}
\end{figure}

\FloatBarrier

\subsection{Rough Quadruple-Well Potential in Four Dimensions}
		
		\paragraph{Model and Experimental Setup.}
		Consider the four-dimensional potential
        \begin{align}
        V(\bm x)
        =&\frac14(x_1^2-1)^2+\frac14(x_2^2-1)^2
        +\frac12x_3^2+\frac12x_4^2\nonumber\\
        &+\varepsilon^4\prod_{j=1}^{4}
        \sin\left(\frac{2\pi x_j}{\varepsilon}\right).
        \end{align}
        Here \(\varepsilon=0.1\) controls the oscillation scale. The drift is \(\bm b=-\nabla V\), with \(D=0.125\), on the computational domain \([-2,2]^2\times[-1.2,1.2]^2\). The first two coordinates have double-well structure, while the remaining two are quadratically confined, so the stationary density is concentrated near \((\pm1,\pm1,0,0)\).

		The MC reference is generated using \(210000\) trajectories with time step \(\Delta t=10^{-3}\). After a burn-in time of \(8\), states are sampled every \(0.1\) during the subsequent \(2\) time units, yielding approximately \(4.2\times10^6\) stationary samples. A histogram with bin width \(0.2\) in each coordinate is constructed, and \(8000\) reference bins are used for training. The network uses four Tanh hidden layers of width \(128\) with a softplus output. The MC histogram has an effective-region MAPE of \(9.6676\%\).
		
		Stage B uses \(4000\) physics training points and \(1000\) monitoring points and runs for at most \(3000\) cycles. Each cycle contains two physics updates followed by one data update.

		\begin{figure}[!htbp]
		\centering
		\safeincludegraphics[width=0.98\textwidth,height=0.43\textheight,keepaspectratio]{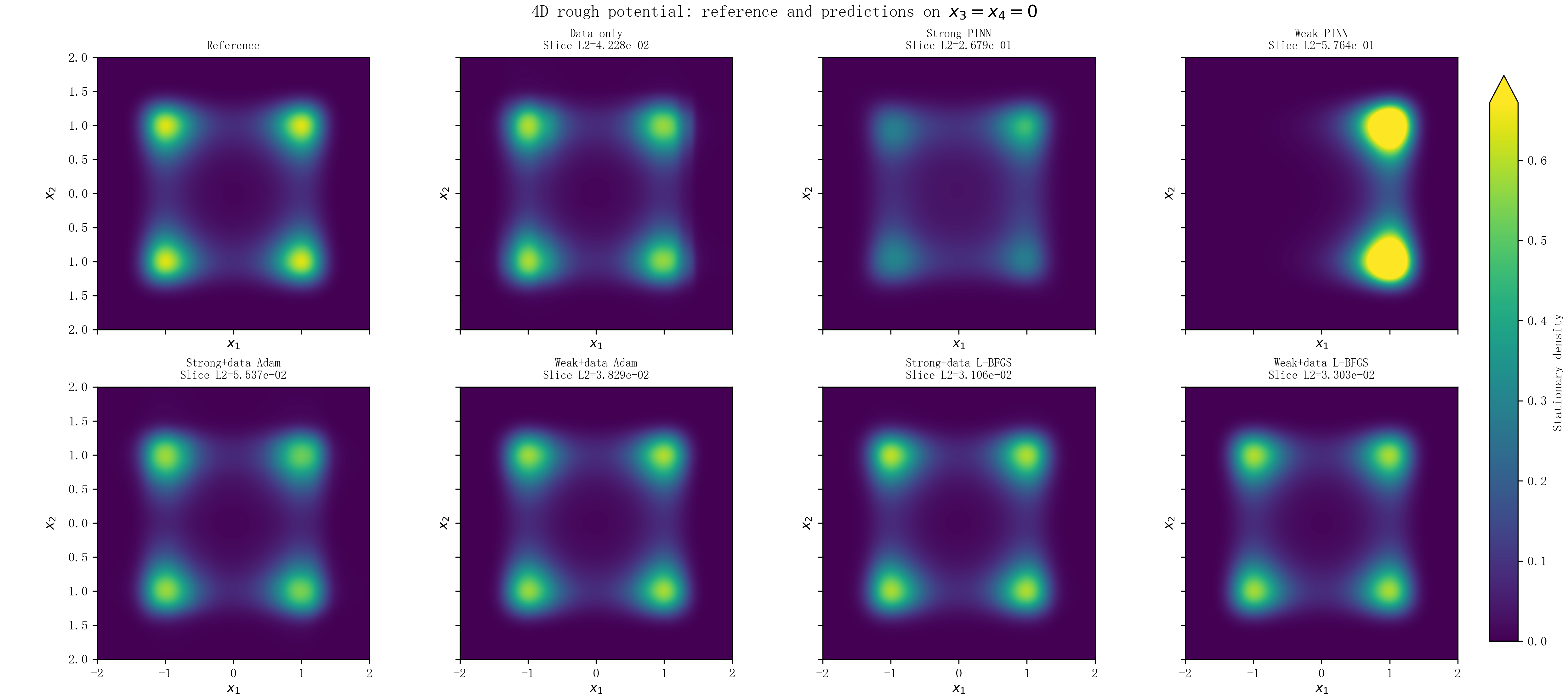}
		\caption{Four-dimensional rough-potential: reference stationary density and predictions of the seven methods on the slice $x_3=x_4=0$.}
		\label{fig:rough4d-density}
		\end{figure}
		
		\paragraph{Results and Discussion}
		On the \(x_3=x_4=0\) slice, the reference density, seven-method predictions, and corresponding errors are shown in Figures.~\ref{fig:rough4d-density}--\ref{fig:rough4d-rel}; the MAPE comparison is summarized in Figure~\ref{fig:rough4d-mape}.
		
		Weak+MC+L-BFGS achieves the lowest effective-region MAPE of \(1.7430\%\), compared with \(6.3159\%\) for Strong+MC+L-BFGS. Along the weak-form training procedure, the MAPE decreases from \(8.2217\%\) for Data-only to \(3.5404\%\) after Stage B and further to \(1.7430\%\) after L-BFGS fine-tuning. The final value is also below the \(9.6676\%\) MAPE of the MC histogram.

		\begin{figure}[!htbp]
		\centering
		\safeincludegraphics[width=0.98\textwidth,height=0.43\textheight,keepaspectratio]{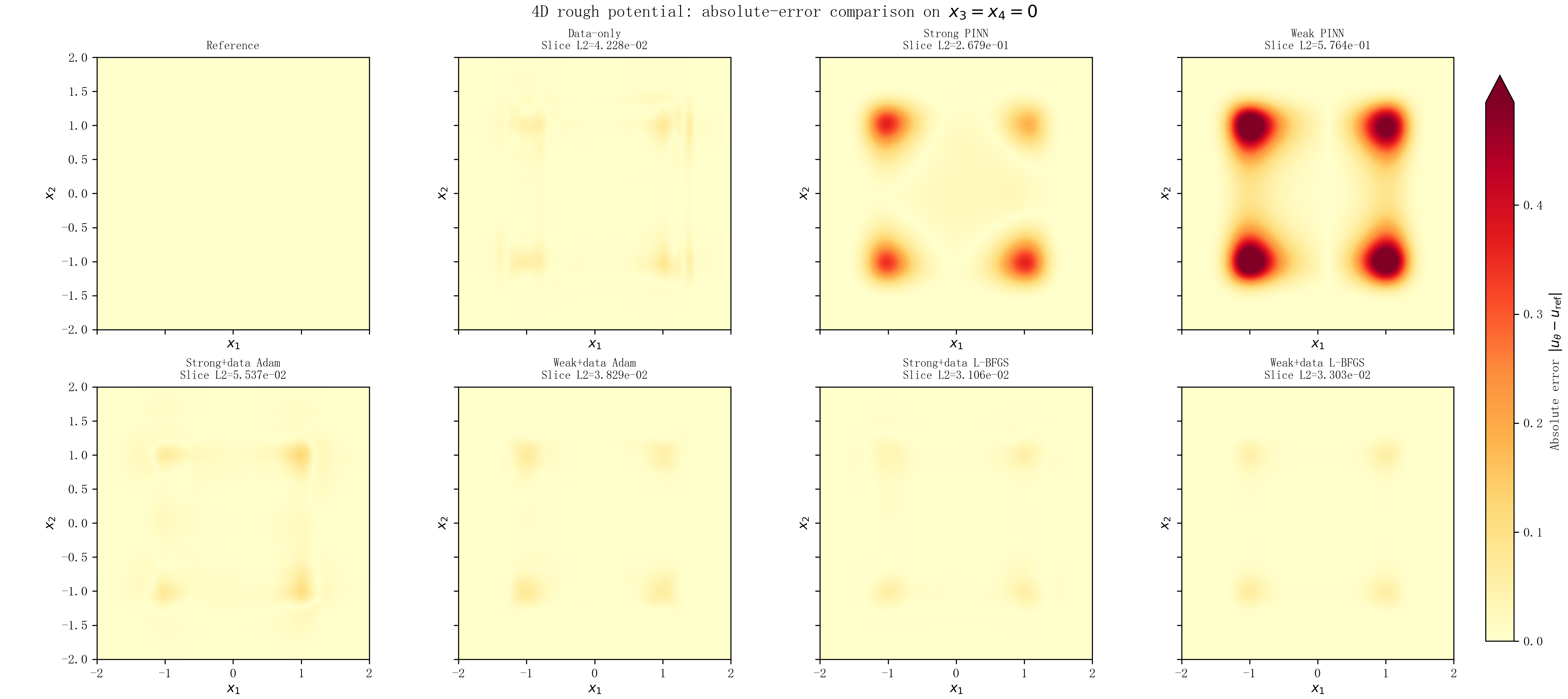}
		\caption{Four-dimensional rough-potential: absolute-error distributions of the seven methods on the slice \(x_3=x_4=0\).}
		\label{fig:rough4d-abs}
		\end{figure}
		
		\begin{figure}[!htbp]
		\centering
		\safeincludegraphics[width=0.98\textwidth,height=0.43\textheight,keepaspectratio]{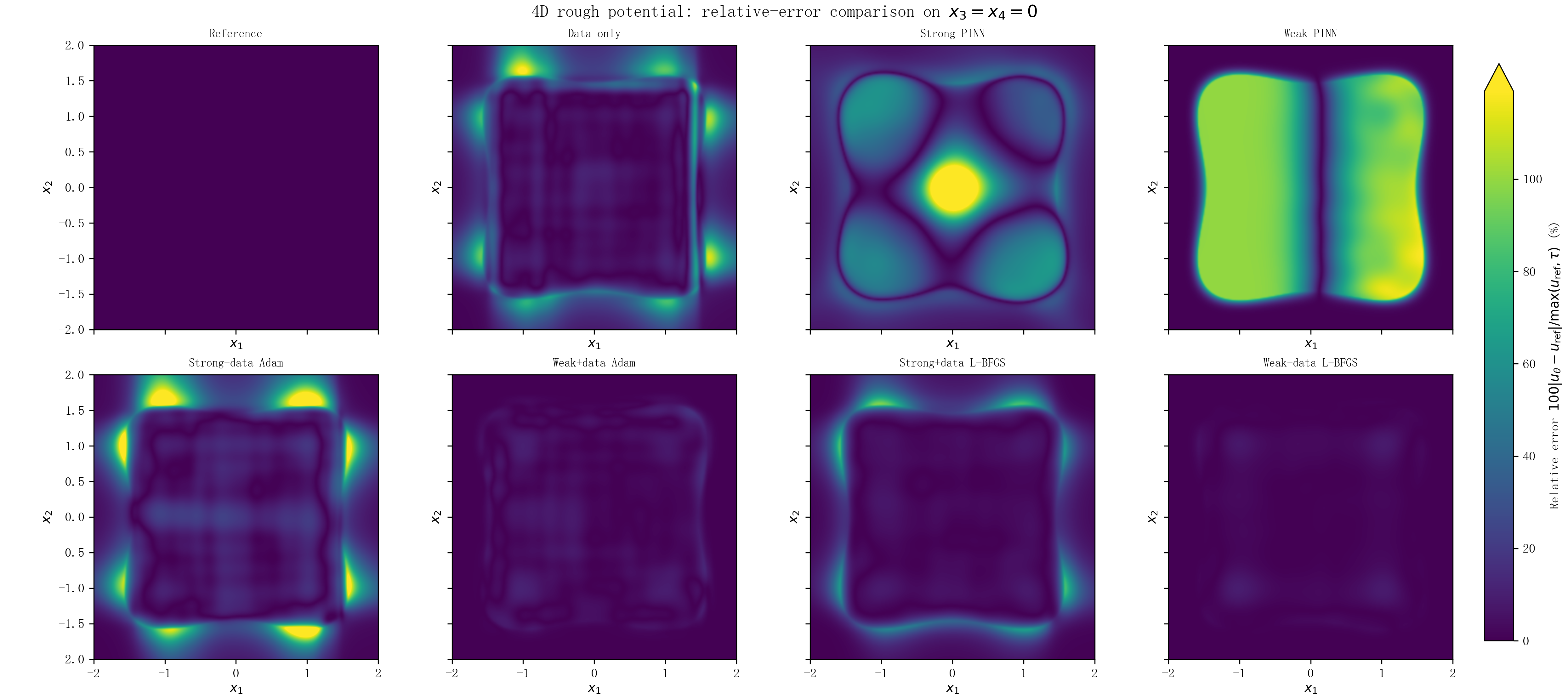}
		\caption{Four-dimensional rough-potential: pointwise relative-error distributions of the seven methods on the slice \(x_3=x_4=0\).}
		\label{fig:rough4d-rel}
		\end{figure}
		
		\begin{figure}[!htbp]
		\centering
		\safeincludegraphics[width=0.86\textwidth,keepaspectratio]{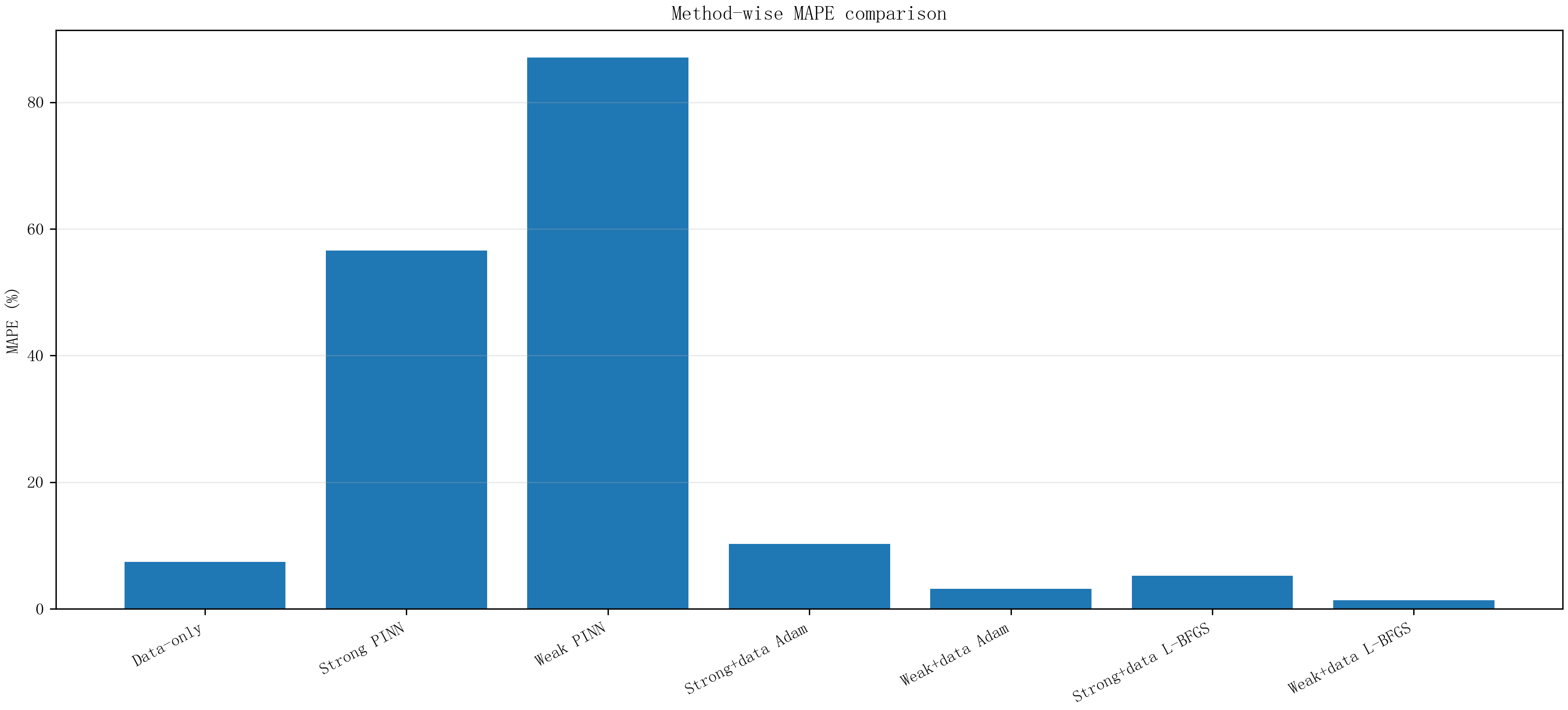}
		\caption{Four-dimensional rough-potential: effective-region MAPE comparison of the seven methods.}
		\label{fig:rough4d-mape}
		\end{figure}

		\FloatBarrier
		\Needspace{8\baselineskip}
		\subsection{Rough Single-Well Potential in Eight Dimensions}
		
		\paragraph{Model and Experimental Setup.}
		The eight-dimensional potential is
        \begin{equation}
        V(\bm x)
        =
        \frac12\sum_{i=1}^{8}\lambda_i x_i^2
                +\varepsilon^4\sum_{j=1}^{8}
        \prod_{m=0}^{3}
        \sin\left(\frac{2\pi x_{j+m}}{\varepsilon}\right),
        \label{eq:8d_potential}
        \end{equation}
        where the coordinate indices are taken cyclically, so that \(x_{i+8}=x_i\). We set \(\varepsilon=0.05\) and \(\lambda_i=2.0+0.1(i-1)\). The drift is \(\bm b=-\nabla V\), with \(D=0.125\), on the computational domain \([-0.7,0.7]^8\). The MC histogram contains \(7{,}337{,}767\) samples in the computational domain, from which \(10{,}000\) reference bins are selected. Training uses \(6{,}000\) physics points and \(1{,}500\) monitoring points. 
		
		The network uses the log-density parameterization introduced in Section~\ref{sec:method}. Physics and monitoring points are sampled from the truncated-Gaussian--uniform mixture in Algorithm~\ref{alg:importance}, with importance correction according to ~\eqref{eq:importance_weight_discrete}--\eqref{eq:importance_quadrature_discrete}. Stage C uses a subset of the existing MC reference bins and reuses the Stage B physics and monitoring points.

		\begin{figure}[!htbp]
		\centering
		\safeincludegraphics[width=0.98\textwidth,height=0.43\textheight,keepaspectratio]{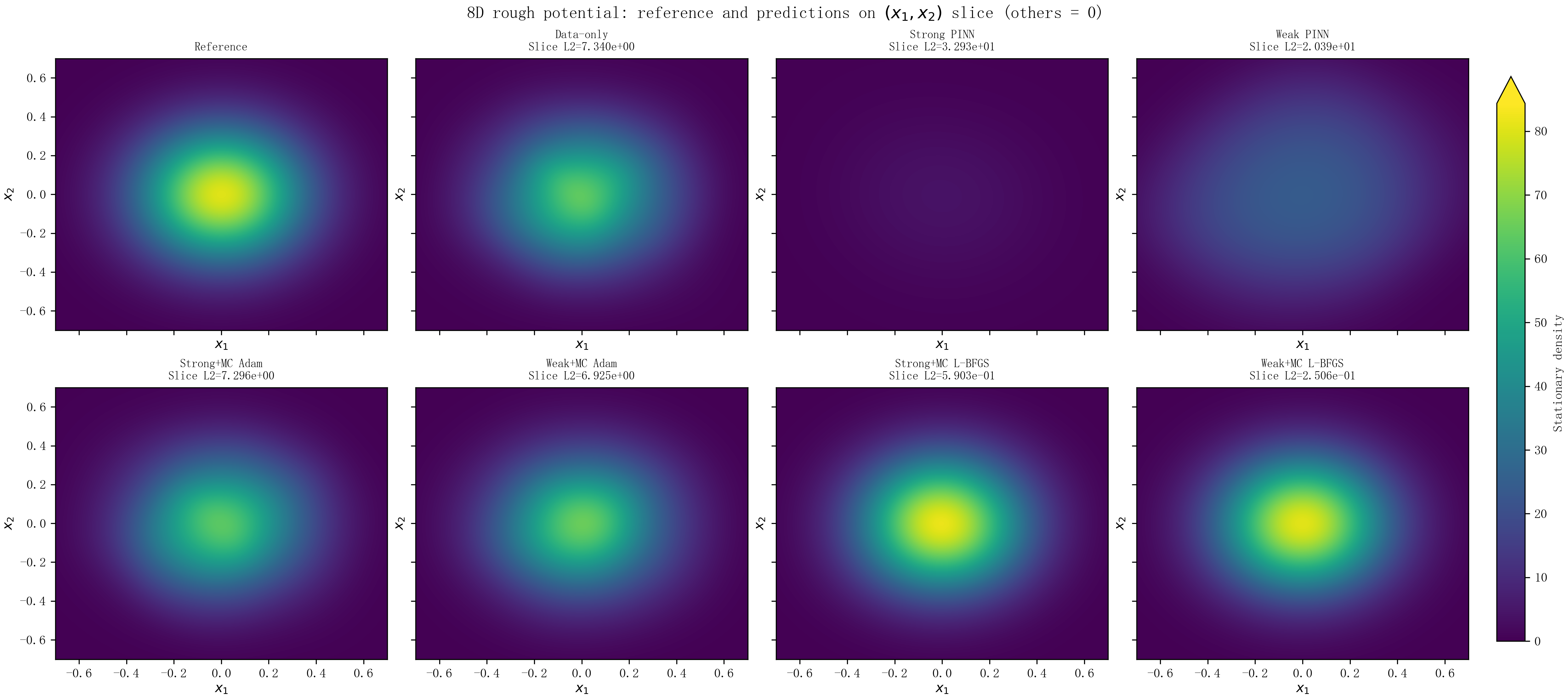}
		\caption{Rough single-well potential: reference stationary density and predictions of the seven methods on the $(x_1,x_2)$ slice with all remaining coordinates set to zero.  }
		\label{fig:rough8d-density}
		\end{figure}
		
		\paragraph{Results and Discussion}
				On the \((x_1,x_2)\) slice with the remaining coordinates fixed at zero, the reference density, predictions, and errors are shown in Figures~\ref{fig:rough8d-density}--\ref{fig:rough8d-rel}; the MAPE comparison is summarized in Figure~\ref{fig:rough8d-mape}. Weak+MC+L-BFGS achieves the lowest effective-region MAPE of \(1.2406\%\), compared with \(4.2581\%\) for Strong+MC+L-BFGS. Along the weak-form training procedure, the MAPE decreases from \(19.6102\%\) for Data-only to \(10.7225\%\) after Stage B and further to \(1.2406\%\) after L-BFGS fine-tuning.

		\begin{figure}[!htbp]
		\centering
		\safeincludegraphics[width=0.98\textwidth,height=0.43\textheight,keepaspectratio]{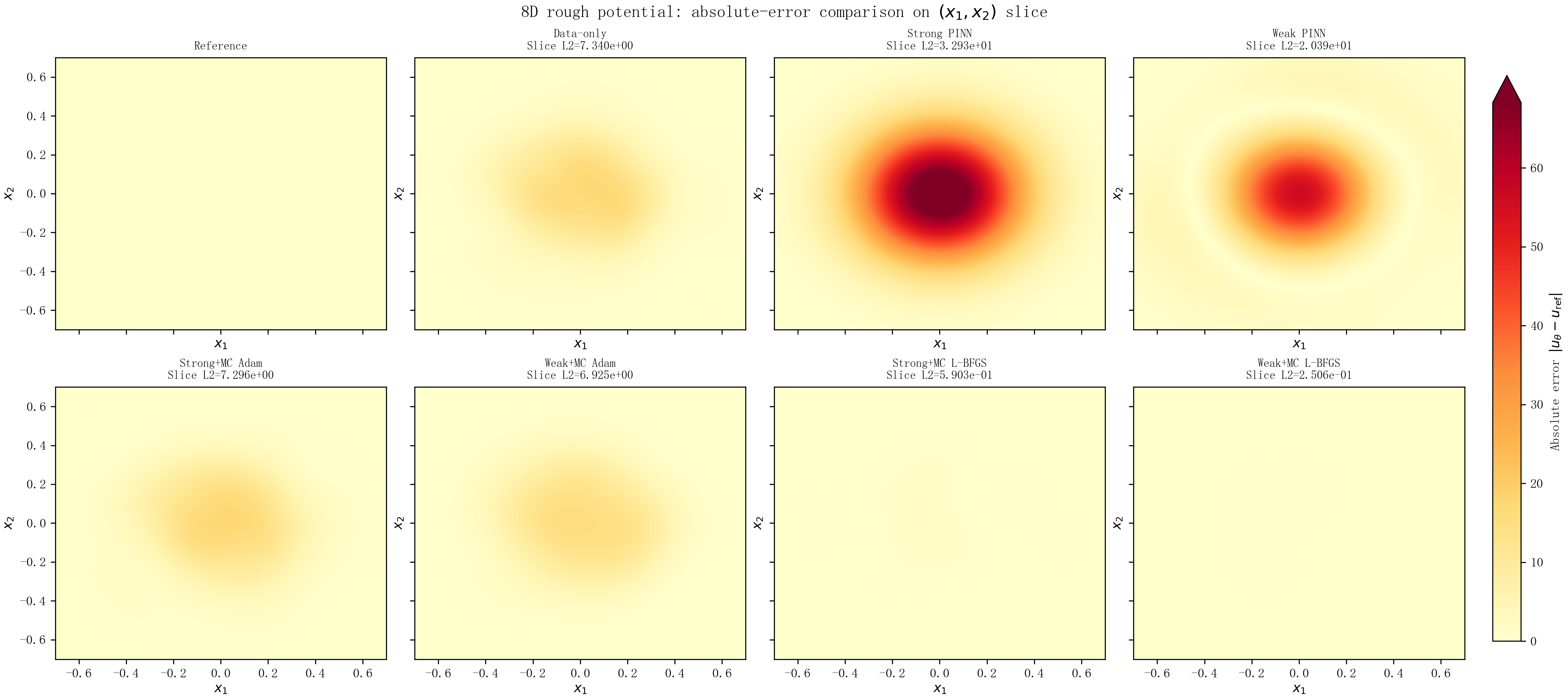}
		\caption{Eight-dimensional rough single-well potential: absolute-error distributions of the seven methods on the \((x_1,x_2)\) slice with all remaining coordinates set to zero.}
		\label{fig:rough8d-abs}
		\end{figure}
		
		\begin{figure}[!htbp]
		\centering
		\safeincludegraphics[width=0.98\textwidth,height=0.43\textheight,keepaspectratio]{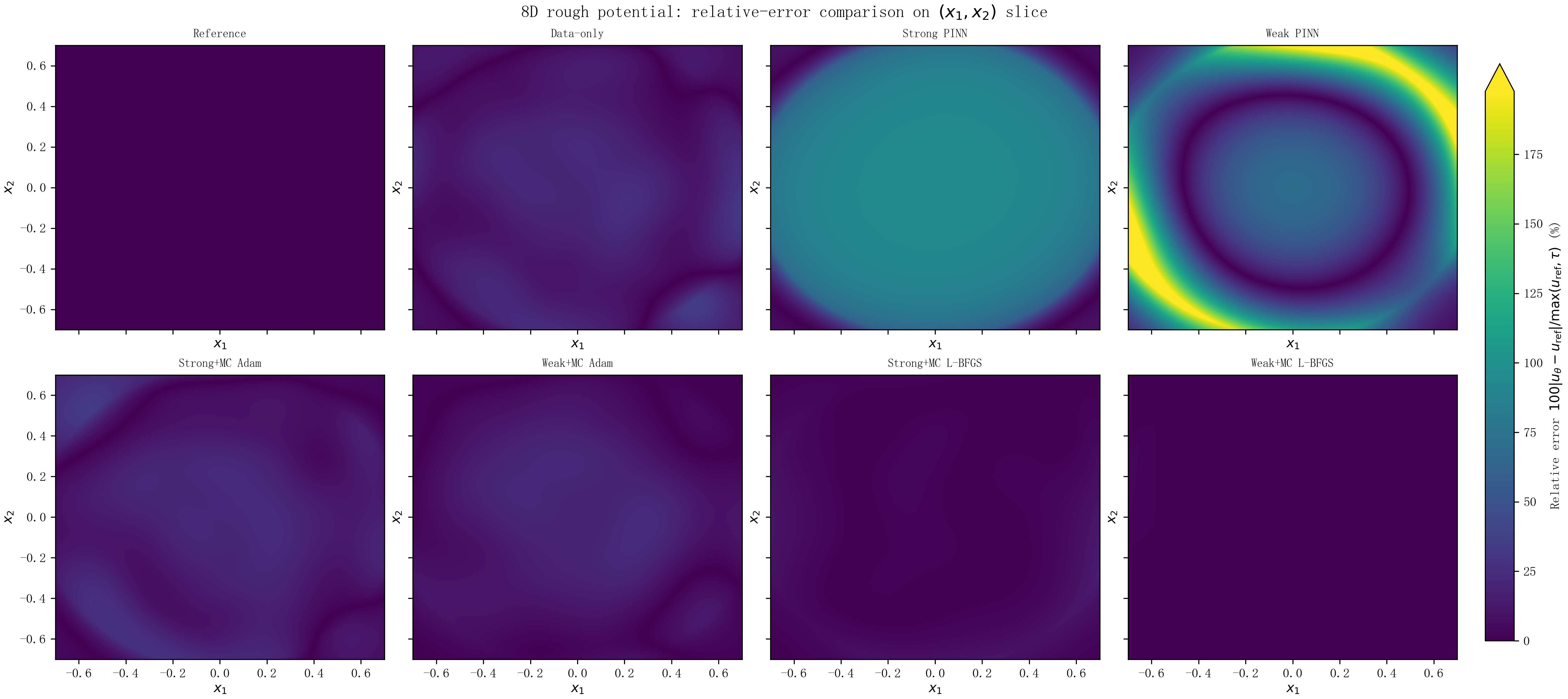}
		\caption{Eight-dimensional rough single-well potential: pointwise relative-error distributions of the seven methods on the \((x_1,x_2)\) slice with all remaining coordinates set to zero.}
		\label{fig:rough8d-rel}
		\end{figure}
		
		\begin{figure}[!htbp]
		\centering
		\safeincludegraphics[width=0.86\textwidth,keepaspectratio]{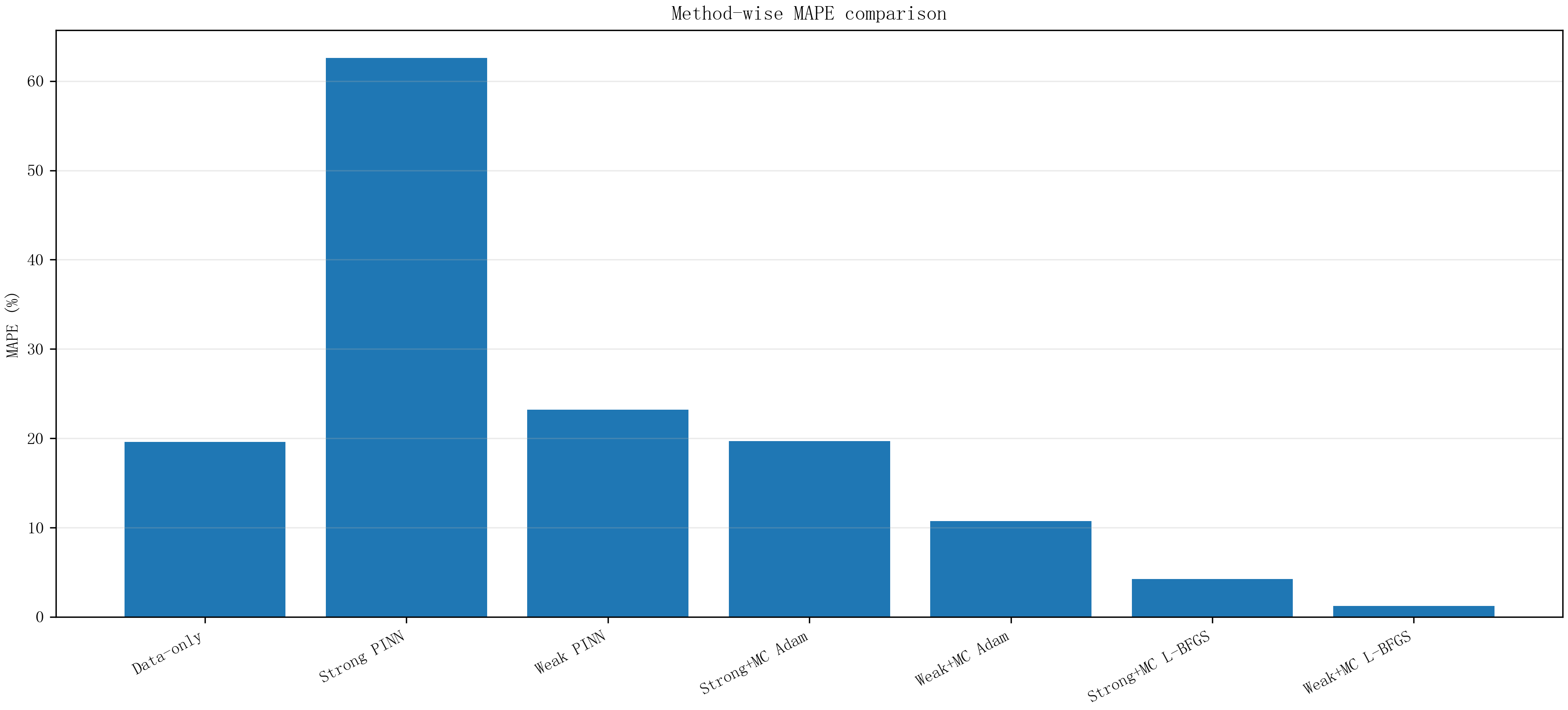}
		\caption{Eight-dimensional rough single-well potential: effective-region MAPE comparison of the seven methods.}
		\label{fig:rough8d-mape}
		\end{figure}

		\FloatBarrier
		\Needspace{8\baselineskip}
		\subsection{Two-Dimensional Equilibrium Positive-Feedback System}
		\label{subsec:positive_feedback}
				\paragraph{Model and Experimental Setup.}
		Consider the equilibrium positive-feedback gene-regulatory system
		\begin{align}
		b_1(x_1,x_2)
		&=
		a+\beta\frac{x_1^n}{S^n+x_1^n}
		-\gamma x_1+\kappa x_2,\\
		b_2(x_1,x_2)
		&=
		a+\beta\frac{x_2^n}{S^n+x_2^n}
		-\gamma x_2+\kappa x_1.
		\end{align}
		Here \(a\) is the basal production rate, \(\beta\) is the positive-feedback strength, \(\gamma\) is the degradation rate, \(\kappa\) is the symmetric coupling strength, \(n\) is the Hill coefficient, and \(S\) is the half-saturation constant. We set
		\begin{equation}
		a=0.05,\qquad
		\beta=0.75,\qquad
		\gamma=1,\qquad
		\kappa=0.20,\qquad
		n=4,\qquad
		S=0.5.
		\end{equation}
		
		The noise variance is \(\sigma^2=0.04\), so \(D=0.02\). The drift admits a scalar potential \(\Phi\) satisfying
        \begin{equation}
        \nabla\Phi(\bm x)=\bm b(\bm x).
        \end{equation}
 The constant of proportionality is evaluated independently by Sobol quadrature on the computational domain and is used only to construct final evaluation. The MC reference is generated using \(8000\) Euler-Maruyama trajectories with time step \(\Delta t=0.002\). After a burn-in time of \(30\), states are sampled every \(0.05\) during the subsequent \(10\) time units, yielding \(1.6\times10^6\) stationary samples. The computational domain is \([-0.6,1.6]^2\), and \(2000\) MC reference points are used for training. The network uses the same low-dimensional architecture as the preceding examples. The Stage B monitoring score uses data and physics weights of \(0.10:1\). Stage C starts from the selected Stage B checkpoint and reuses the same physics points together with the \(2000\) MC reference points; the Stage C data and physics weights are \(0.03:1\).
		
		\begin{figure}[!htbp]
		\centering
		\safeincludegraphics[width=0.98\textwidth,height=0.43\textheight,keepaspectratio]{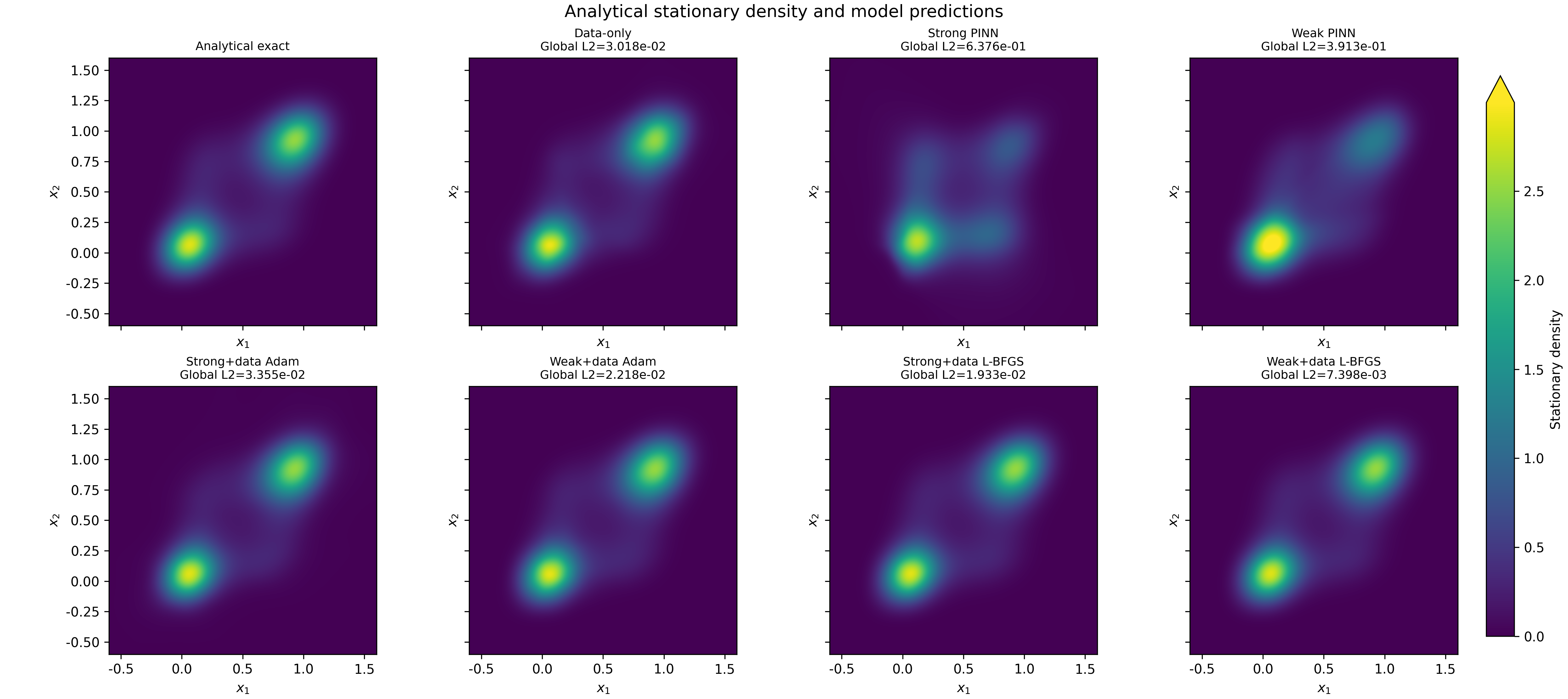}
		\caption{Two-dimensional equilibrium positive-feedback system: analytical stationary density and full-domain predictions of the seven methods.}
		\label{fig:fp2d-density}
		\end{figure}
		
		\paragraph{Results and Discussion}
		The analytical stationary density and the predictions of the seven methods are shown in Figures~\ref{fig:fp2d-density}--\ref{fig:fp2d-rel}; the MAPE comparison is summarized in Figure~\ref{fig:fp2d-mape}.
		
		Weak+MC+L-BFGS achieves the lowest effective-region MAPE of \(0.7518\%\), compared with \(3.2612\%\) for Strong+MC+L-BFGS. For the weak-form hybrid method, the MAPE decreases from \(5.3825\%\) for Data-only to \(3.4041\%\) after Stage B and further to \(0.7518\%\) after L-BFGS fine-tuning. The result shows that the same framework remains effective for this equilibrium gene-regulatory model.

		\begin{figure}[!htbp]
		\centering
		\safeincludegraphics[width=0.98\textwidth,height=0.43\textheight,keepaspectratio]{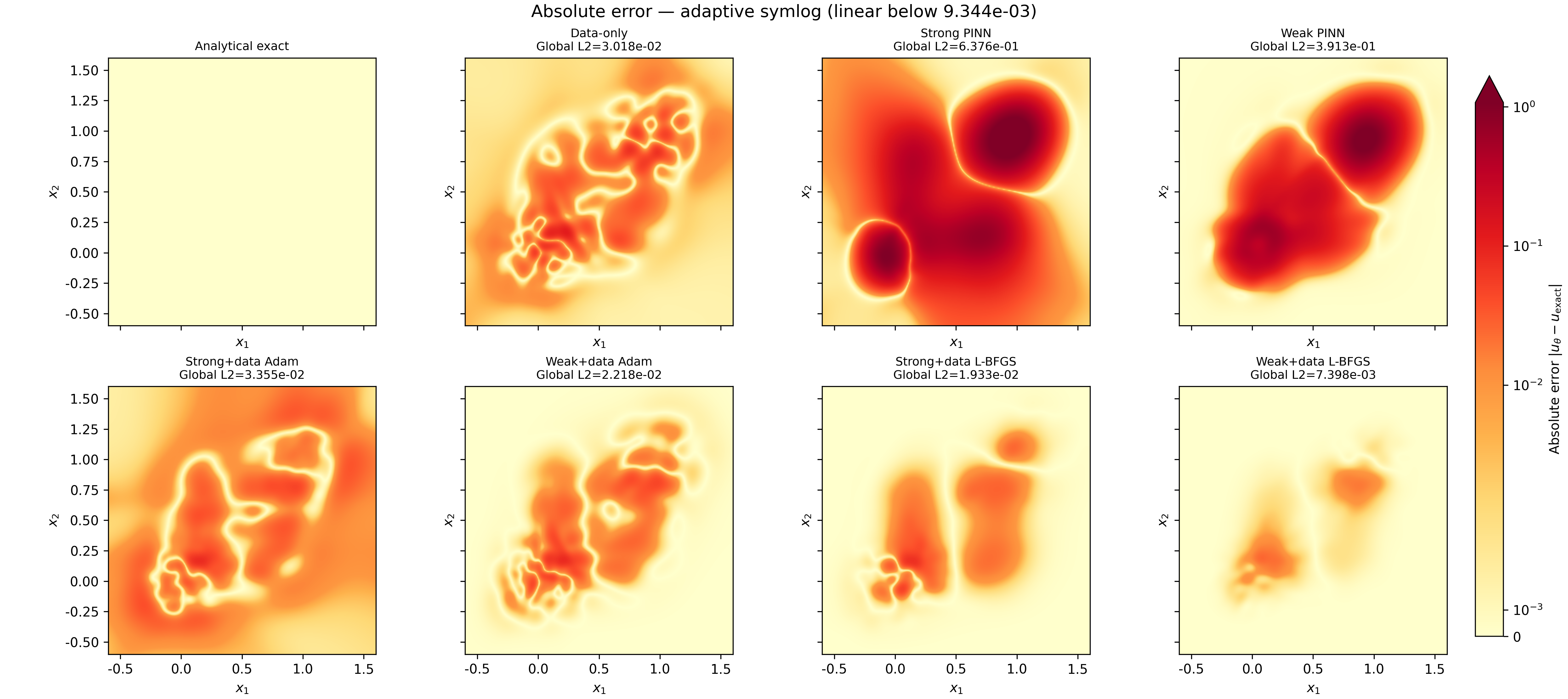}
		\caption{Two-dimensional equilibrium positive-feedback system: full-domain absolute-error distributions of the seven methods.}
		\label{fig:fp2d-abs}
		\end{figure}
		
		\begin{figure}[!htbp]
		\centering
		\safeincludegraphics[width=0.98\textwidth,height=0.43\textheight,keepaspectratio]{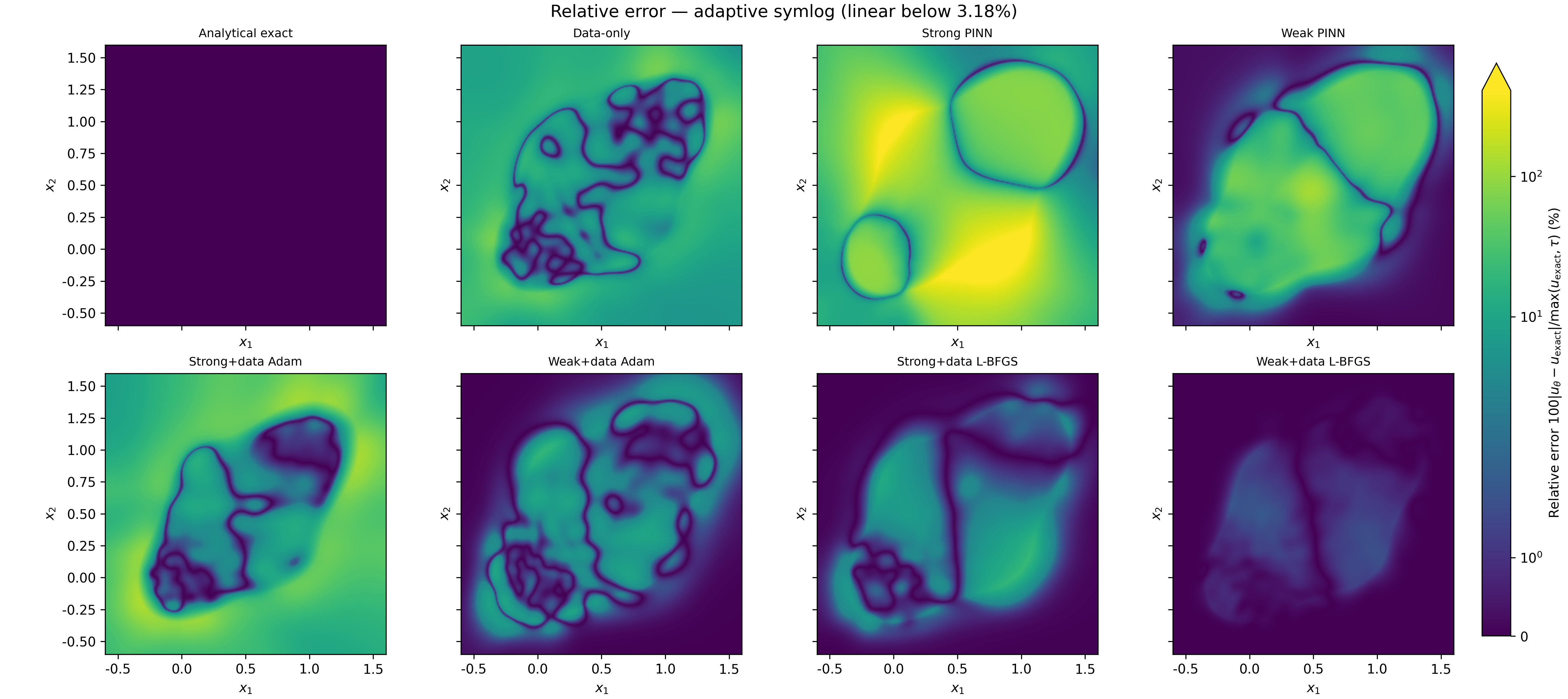}
		\caption{Two-dimensional equilibrium positive-feedback system: full-domain pointwise relative-error distributions of the seven methods, evaluated against the analytical reference density.}
		\label{fig:fp2d-rel}
		\end{figure}
		
		\begin{figure}[!htbp]
		\centering
		\safeincludegraphics[width=0.86\textwidth,keepaspectratio]{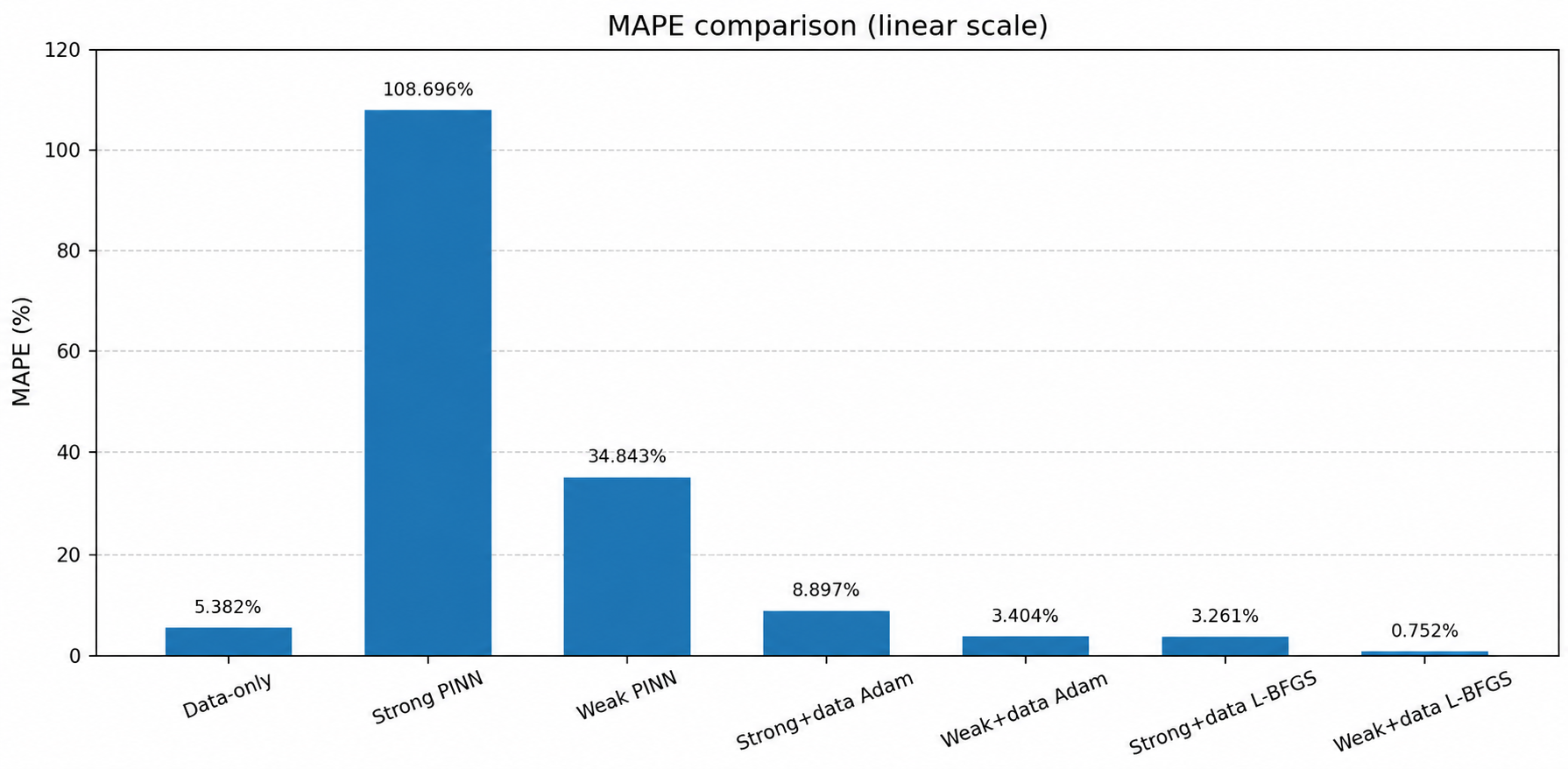}
		\caption{Two-dimensional equilibrium positive-feedback system: effective-region MAPE comparison of the seven methods.}
		\label{fig:fp2d-mape}
		\end{figure}

		\FloatBarrier
		\Needspace{8\baselineskip}
		\subsection{Cross-Dimensional Summary and Ablation Studies}
		\label{subsec:cross_summary}
		
		Table~\ref{tab:main_summary} summarizes the effective-region MAPE for the six benchmark examples considered above.

		\begin{table}[H]
\centering
\caption{Effective-region MAPE (\%) for the six main benchmarks.}
\label{tab:main_summary}
\resizebox{\textwidth}{!}{%
\begin{tabular}{lcccccc}
\toprule
Method & 2D double well & 2D rough rotation & 4D rough & 4D nonequilibrium & 8D rough & 2D positive feedback\\
\midrule
Data-only & 7.2102 & 11.3059 & 8.2217 & 10.3307 & 19.6102 & 5.3825\\
Strong PINN & 262.8252 & 163.5147 & 106.1517 & 65.4960 & 62.5901 & 108.6956\\
Weak/Integral PINN & 47.8723 & 97.4698 & 2.6324 & 57.5599 & 23.2246 & 34.8433\\
Strong + MC Adam & 8.9731 & 13.6343 & 8.2758 & 10.0236 & 19.6914 & 8.8971\\
Weak/Integral + MC Adam & 1.2187 & 2.4990 & 3.5404 & 8.3098 & 10.7225 & 3.4041\\
Strong + MC + L-BFGS & 1.8557 & 3.1981 & 6.3159 & 7.8237 & 4.2581 & 3.2612\\
\textbf{Weak/Integral + MC + L-BFGS} & \textbf{0.3452} & \textbf{0.4813} & \textbf{1.7430} & \textbf{4.2620} & \textbf{1.2406} & \textbf{0.7518}\\
\bottomrule
\end{tabular}}
\end{table}
		
		Weak/Integral+MC+L-BFGS gives the lowest MAPE in all six examples. The rough rotational example also demonstrates the pointwise-orthogonal non-gradient extension developed in Section~\ref{sec:formulation}. The stage ablation shows the same pattern across all six examples: Weak/Integral+MC (Adam) improves upon Data-only, and the subsequent L-BFGS fine-tuning further reduces the MAPE.

		\FloatBarrier
        \Needspace{8\baselineskip}
        
        \subsection{Robustness to Noisy MC Data}
        \label{subsec:noise_robustness}

        To assess robustness to noisy MC data, we compare Strong+MC+L-BFGS and Weak+MC+L-BFGS. The two methods use the same MC data and differ in the form of the physics constraint. The effective-region MAPE in \eqref{eq:mape} is used for evaluation.

        \paragraph{Noise Model.}
        Let \(\{(x_i,\widehat m_i)\}_{i=1}^{N_{\rm ref}}\) denote the MC reference data. For a noise level \(\alpha\in[0,1)\), we define
        \begin{equation}
        \widetilde m_i^{(\alpha)}
        =
        \widehat m_i\left(1+\alpha\xi_i\right),
        \qquad
        \xi_i\overset{\rm i.i.d.}{\sim}\mathcal U(-1,1),
        \label{eq:mc_multiplicative_noise}
        \end{equation}
        where $\mathcal U(-1,1)$ is the normal distribution.  Equivalently, we generate \(U_i\sim\mathcal U(0,1)\) and  set \(\xi_i=2U_i-1\).
        Thus, the relative perturbation of each nonzero MC density value is bounded by \(\alpha\). Only the MC density values are perturbed; the state coordinates, SDE parameters  and physics points are unchanged. For each example, the same realization \(\{\xi_i\}\) is reused across the tested noise levels, and the two methods use the same perturbed MC data and training-point sets.

        The three two-dimensional examples use \(\alpha=0.1,0.3,0.4\), while the four- and eight-dimensional examples use \(\alpha=0.05,0.1,0.2\). For the positive-feedback example, the noise tests use \(1600\) MC reference points and \(800\) physics points; both methods use the same reduced training sets.

        \begin{table}[H]
        \centering
        \caption{Effective-region MAPE (\%) for Strong+MC+L-BFGS and Weak+MC+L-BFGS on the three two-dimensional examples with noisy MC data. Boldface indicates the lower value under each noise condition.}
        \label{tab:noise_2d_mape}
        \resizebox{\textwidth}{!}{
        \begin{tabular}{lccccccccc}
        \toprule
        & \multicolumn{3}{c}{2D double well}
        & \multicolumn{3}{c}{2D rough rotation}
        & \multicolumn{3}{c}{2D positive feedback}\\
        \cmidrule(lr){2-4}
        \cmidrule(lr){5-7}
        \cmidrule(lr){8-10}
        Method
        & \(\alpha=0.1\) & \(\alpha=0.3\) & \(\alpha=0.4\)
        & \(\alpha=0.1\) & \(\alpha=0.3\) & \(\alpha=0.4\)
        & \(\alpha=0.1\) & \(\alpha=0.3\) & \(\alpha=0.4\)\\
        \midrule
        Strong + MC + L-BFGS
        & 2.0200 & 2.8670 & 3.3819
        & 4.4411 & 5.2529 & 5.0861
        & 2.1462 & 15.7072 & 8.8053\\
        \textbf{Weak + MC + L-BFGS}
        & \textbf{0.4204} & \textbf{0.8915} & \textbf{1.1188}
        & \textbf{0.4604} & \textbf{0.9366} & \textbf{1.1810}
        & \textbf{0.5189} & \textbf{1.2454} & \textbf{3.3559}\\
        \bottomrule
        \end{tabular}}
        \end{table}

         Table~\ref{tab:noise_2d_mape} shows that Weak+MC+L-BFGS achieves a lower MAPE than Strong+MC+L-BFGS in all nine two-dimensional noise tests. In the rough rotational example, the weak-form MAPE increases from \(0.4604\%\) at \(\alpha=0.1\) to \(1.1810\%\) at \(\alpha=0.4\), while the corresponding strong-form errors remain above \(4\%\) throughout. The positive-feedback example shows a similar robustness advantage, with a particularly large difference at \(\alpha=0.3\).

        \begin{table}[H]
        \centering
        \caption{Effective-region MAPE (\%) for Strong+MC+L-BFGS and Weak+MC+L-BFGS on the four- and eight-dimensional rough-potential examples with noisy MC data. Boldface indicates the lower value under each noise condition.}
        \label{tab:noise_highdim_mape}
        \resizebox{\textwidth}{!}{
        \begin{tabular}{lcccccc}
        \toprule
        & \multicolumn{3}{c}{4D rough-potential} & \multicolumn{3}{c}{8D rough-potential}\\
        \cmidrule(lr){2-4}\cmidrule(lr){5-7}
        Method & $\alpha=0.05$ & $\alpha=0.1$ & $\alpha=0.2$
        & $\alpha=0.05$ & $\alpha=0.1$ & $\alpha=0.2$\\
        \midrule
        Strong + MC + L-BFGS
        & 6.8912 & 8.8394 & 12.1804
        & 4.7434 & 8.1804 & 35.9194\\
        \textbf{Weak + MC + L-BFGS}
        & \textbf{1.9946} & \textbf{2.2216} & \textbf{2.5418}
        & \textbf{1.3418} & \textbf{1.7620} & \textbf{3.2789}\\
        \bottomrule
        \end{tabular}}
        \end{table}

        The same comparison in four and eight dimensions is reported in Table~\ref{tab:noise_highdim_mape}. At \(\alpha=0.2\), Weak+MC+L-BFGS gives MAPE values of \(2.5418\%\) and \(3.2789\%\), respectively, compared with \(12.1804\%\) and \(35.9194\%\) for Strong+MC+L-BFGS.

        \paragraph{Robustness to Rough Potentials.}
We vary the roughness scale $\varepsilon$ while keeping the
test-function settings fixed. For each $\varepsilon$, we compare
the Strong and Weak methods against a Monte Carlo reference
computed for the corresponding potential.  The WeakFamily+MC+L-BFGS method has the lower MAPE at
all four values.

\begin{table}[H]
\centering
\caption{Effective-region MAPE (\%) under four roughness scales in the four-dimensional nonequilibrium experiment.}
\label{tab:ness4d_epsilon_sweep}
\resizebox{\textwidth}{!}{%
\begin{tabular}{ccccc}
\toprule
$\varepsilon$ & Strong + MC + L-BFGS & WeakFamily + MC + L-BFGS & Strong $-$ Weak (pp) & Conclusion\\
\midrule
0.10 (default) & 7.8237 & 4.2620 & 3.5617 & Weak lower\\
0.15 & 7.6576 & 3.9668 & 3.6908 & Weak lower\\
0.20 & 9.6262 & 4.0909 & 5.5353 & Weak lower\\
0.25 & 8.2183 & 4.3362 & 3.8821 & Weak lower\\
\bottomrule
\end{tabular}}
\end{table}

\paragraph{Robustness to Density Estimation Errors.}
To isolate the effect of density perturbations, we keep the coordinates,
drift, MC labels  and physics evaluation points fixed. We replace $u_\theta$ by $\widetilde u_\theta$ only when evaluating the physics loss, where $\widetilde u_\theta(x)
  =u_\theta(x)\left[1+\alpha\varepsilon\,\xi(x/\varepsilon)\right].$  Here $\xi$ is a fixed smooth function used in both the Strong and Weak methods. The strong loss involves second derivatives of $\widetilde u_\theta$,
whereas the weak loss involves only first derivatives.
For all $\alpha>0$, we normalize the physics loss using the same
scale as in the clean case: $s_{p}^{(B)}(\alpha)=s_{p}^{(B)}(0)$, $s_{p}^{(C)}(\alpha)=s_{p}^{(C)}(0).$
We use the same normalization at all noise levels so that the physics
losses can be compared on a common scale.

\begin{table}[H]
\centering
\caption{Four-dimensional $u_\theta$ perturbations with fixed normalization.}
\label{tab:ness4d_utheta_noise}
\resizebox{\textwidth}{!}{%
\begin{tabular}{ccccc}
\toprule
$\alpha$ & Strong MAPE (\%) & Weak MAPE (\%) & Strong shift (\%) & Weak shift (\%)\\
\midrule
0.0000 & 8.061 & 4.201 & -- & --\\
0.0010 & 7.664 & 4.169 & 1.313 & 0.203\\
0.0025 & 9.856 & 4.183 & 2.489 & 0.186\\
0.0050 & 9.820 & 4.117 & 2.514 & 0.291\\
0.0100 & 9.643 & 4.152 & 2.610 & 0.221\\
0.0150 & 9.649 & 4.211 & 2.611 & 0.158\\
0.0200 & 21.995 & 4.177 & 12.198 & 0.174\\
0.0250 & 27.973 & 4.155 & 16.793 & 0.252\\
\bottomrule
\end{tabular}}
\end{table}

We use test-function weights $\omega_k\propto\lambda_k^{0.15}$,
normalized to have mean one. Without perturbations, this choice gives
a weak MAPE of $4.201\%$, the lower of the two settings tested.
The seven-method comparison above uses equal weights.  We keep the normalization of the physics loss fixed across noise levels.
Relative to the unperturbed solutions, the weak solution changes by less
than $0.30\%$ at all tested noise levels, whereas the strong solution
changes by $16.793\%$ at $\alpha=0.025$. At this noise level, the MAPE
is $4.155\%$ for weak and $27.973\%$ for strong.
These results show that, in this experiment, the weak method, which
uses only first derivatives, is less sensitive to perturbations in
$u_\theta$ than the strong method, which uses second derivatives.


Overall, Weak+MC+L-BFGS achieves a lower MAPE in all $15$ tested noise
conditions, suggesting that the variational formulation is more robust
to noise in the MC data.

		\FloatBarrier
		\section{Conclusion}
        \label{sec:conclusion}
        This work develops a three-stage learning framework for stationary Fokker--Planck equations by combining Monte Carlo density data with a variational physics constraint.  The MC data used for pretraining provide weak supervision, while the variational loss enforces consistency with the underlying physical laws. For equilibrium steady states, the loss is derived from the free-energy dissipation structure, and the formulation is further extended to a class of non-equilibrium steady states with a pointwise orthogonal drift decomposition.

        Numerical experiments demonstrate the effectiveness of the proposed framework across low- and high-dimensional problems, rough potentials, and noisy Monte Carlo observations. In particular, the weak/variational hybrid method consistently improves upon the corresponding strong-form formulation in the reported noise tests, while the four- and eight-dimensional examples show the performance of the variational formulation in higher dimensions. This example illustrates the application of the variational formulation
to a stationary problem with non-gradient drift.

       Our method applies to equilibrium steady states and to nonequilibrium
steady states satisfying the pointwise orthogonality condition.
Using a family of test functions, we also treat stationary problems with
non-gradient drift that does not satisfy the pointwise orthogonality condition. The four-dimensional example demonstrates this extension:
the stationary current is nonzero, and the drift is not pointwise
orthogonal to the potential gradient.   Several questions remain for future study, including the extension of  the method to time-dependent problems.

\appendix

\section{Convergence of the Weak--MC Method}
\label{app:theory}

\subsection{Problem formulation}

Consider the Fokker--Planck equation on a bounded 
Lipschitz domain $\Omega\subset\mathbb R^d$, with diffusion
coefficient $D>0$ and potential $V\in C^1(\overline\Omega)$.
The flux is $\mathbf J=-D\nabla u-u\nabla V$, and the
equation reads $\partial_t u+\nabla\cdot\mathbf J=0$.
We seek an equilibrium satisfying $\mathbf J=0$ and
$\int_\Omega u\,\mathrm dx=1$. Its density is
$u^*=Z^{-1}e^{-V/D}$, where
$Z=\int_\Omega e^{-V/D}\,\mathrm dx$.

For a positive candidate $u\in C^1(\overline\Omega)$, define
\begin{align}
 \Phi(u)
 &=
 \frac{\displaystyle\int_\Omega
       |\mathbf J|^2/u_\varepsilon\,\mathrm dx}
      {\displaystyle\int_\Omega u\,\mathrm dx},
 \qquad
 u_\varepsilon=\max\{u,\varepsilon_u\},
 \quad \varepsilon_u=10^{-12},
 \label{app:weak-continuous}\\
 \mathcal L(u)
 &=
 \lambda_{\rm W}\Phi(u)
 +\lambda_{\rm MC}\mathcal L_{\rm MC}(u),
 \qquad
 \mathcal L_{\rm MC}(u)
 =
 \frac1M\sum_{j=1}^M
 \bigl(u(y_j)-v_j\bigr)^2,
 \label{app:full-loss}
\end{align}
where $\lambda_{\rm W},\lambda_{\rm MC}>0$ are fixed
and $v_j$ are the MC reference values at $y_j$.
Since $\Omega$ is connected, $\Phi(u)=0$ implies
$u=c\,u^*$ for some $c>0$. Conversely,
$\Phi(c\,u^*)=0$ for every $c>0$.
The MC term constrains the constant $c$ within this family.
The integrals are approximated by equally weighted
Sobol quadrature in 2D and 4D and by quadrature with
importance-sampling weights in 8D. Both procedures
approximate the functional $\Phi$.
The analysis below concerns local minimizers of
$\mathcal L$ with fixed weights and does not establish
convergence of Adam or L-BFGS.

\subsection{Discrete loss and local error estimates}
In this subsection, we use a local quadratic approximation
of the combined loss to quantify the reduction in mean-square
error obtained by adding the weak loss to the MC data-fitting term.

The vector $u_h\in(0,\infty)^{m_h}$ contains the density
values at the quadrature points, and $R_hu_h$ approximates
the flux $\mathbf J$, where $R_h$ is a fixed linear map.
The vector $a_h$ contains the positive quadrature weights.
The diagonal matrix $W_h(u_h)$ contains the entries
$(a_h)_j/\max\{(u_h)_j,\varepsilon_u\}$, repeated for
each component of the flux. The discrete loss is
\begin{equation}\label{app:weak-discrete}
 \Phi_h(u_h)
 =
 \frac{(R_hu_h)^\mathsf T W_h(u_h)(R_hu_h)}
      {a_h^\mathsf T u_h}.
\end{equation}

Assume that a positive discrete equilibrium $u_h^*$
satisfies $R_hu_h^*=0$ and
$(u_h^*)_j\neq\varepsilon_u$ for every $j$.
Then $\Phi_h$ is smooth near $u_h^*$, and
\begin{equation}\label{app:weak-hessian}
 \nabla\Phi_h(u_h^*)=0,
 \qquad
 \nabla^2\Phi_h(u_h^*)
 =
 \frac{2R_h^\mathsf T W_h(u_h^*)R_h}
      {a_h^\mathsf T u_h^*}.
\end{equation}
Indeed, every term involving a derivative of $W_h$
or of the mass denominator contains a factor
$R_hu_h^*$ and therefore vanishes at $u_h^*$.

Define the symmetric positive semidefinite matrix
\begin{equation}\label{app:Kh}
 K_h
 =
 \frac{m_h}{2}\nabla^2\Phi_h(u_h^*)
 =
 \frac{m_h}{a_h^\mathsf T u_h^*}
 R_h^\mathsf T W_h(u_h^*)R_h.
\end{equation}
For this finite-dimensional analysis, assume that the
MC data observe every component of $u_h$ with equal
weights, so that $M=m_h$ and the data loss is
$m_h^{-1}\|u_h-v_h\|_2^2$.
Writing $v_h=u_h^*+e_h$ and $z=u_h-u_h^*$,
the quadratic approximation of the combined loss,
multiplied by $m_h$, is
\begin{equation}\label{app:quadratic-model}
 F_h(z;e_h)
 =
 \lambda_{\rm W}z^\mathsf T K_hz
 +\lambda_{\rm MC}\|z-e_h\|_2^2.
\end{equation}
The first term penalizes deviations from the discrete
equilibrium equation, while the second fits the MC data.

\begin{theorem}[Mean-square error estimate]
\label{thm:spectral}
Assume $\lambda_{\rm W},\lambda_{\rm MC}>0$,
$\mathbb E[e_h]=0$, and
$\operatorname{Cov}(e_h)=\Sigma_h$ with
$0<\operatorname{tr}(\Sigma_h)<\infty$.
The unique minimizer of \eqref{app:quadratic-model} is
\begin{equation}\label{app:filter}
 \bar z_h=B_he_h,
 \qquad
 B_h
 =
 \lambda_{\rm MC}
 \bigl(\lambda_{\rm MC}I+\lambda_{\rm W}K_h\bigr)^{-1}.
\end{equation}
Its mean-square error satisfies
\begin{equation}\label{app:general-ratio}
 \frac{\mathbb E\|\bar z_h\|_2^2}
      {\mathbb E\|e_h\|_2^2}
 =
 \frac{\operatorname{tr}
       (B_h\Sigma_hB_h^\mathsf T)}
      {\operatorname{tr}(\Sigma_h)}
 \leq 1.
\end{equation}
If $\Sigma_h=\zeta_h^2I$ with $\zeta_h>0$, then
\begin{equation}\label{app:isotropic-ratio}
 \frac{\mathbb E\|\bar z_h\|_2^2}
      {\mathbb E\|e_h\|_2^2}
 =
 \frac1{m_h}\sum_{k=1}^{m_h}
 \left(
 \frac{\lambda_{\rm MC}}
      {\lambda_{\rm MC}+\lambda_{\rm W}\mu_{h,k}}
 \right)^2,
\end{equation}
where $\mu_{h,1},\ldots,\mu_{h,m_h}$ are the eigenvalues
of $K_h$. Error components corresponding to positive
eigenvalues are reduced, whereas those corresponding
to zero eigenvalues remain unchanged.
\end{theorem}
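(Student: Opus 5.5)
The proof is finite-dimensional linear algebra, and I would carry it out in three steps.

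\emph{Step 1: the minimizer.} $K_h$ is symmetric positive semidefinite: by \eqref{app:Kh}, $K_h=\frac{m_h}{a_h^\mathsf T u_h^*}R_h^\mathsf T W_h R_h$, where $W_h(u_h^*)$ is diagonal with positive entries and $a_h^\mathsf T u_h^*>0$. Hence $A_h:=\lambda_{\rm MC}I+\lambda_{\rm W}K_h$ is symmetric positive definite, with smallest eigenvalue at least $\lambda_{\rm MC}>0$. So $F_h(\cdot;e_h)$ is a strictly convex quadratic and has a unique minimizer, characterized by the first-order condition
\[
2\lambda_{\rm W}K_h z+2\lambda_{\rm MC}(z-e_h)=0 .
\]
This gives $\bar z_h=\lambda_{\rm MC}A_h^{-1}e_h=B_he_h$, which is \eqref{app:filter}.

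\emph{Step 2: the ratio \eqref{app:general-ratio}.} Since $B_h$ is deterministic, $\mathbb E[\bar z_h]=0$ and
\[
\mathbb E\|\bar z_h\|_2^2=\operatorname{tr}\bigl(B_h\,\mathbb E[e_he_h^\mathsf T]\,B_h^\mathsf T\bigr)=\operatorname{tr}(B_h\Sigma_hB_h^\mathsf T),
\]
using $\mathbb E[e_he_h^\mathsf T]=\Sigma_h$ (the mean is zero). Likewise $\mathbb E\|e_h\|_2^2=\operatorname{tr}(\Sigma_h)>0$, so the ratio is well defined.

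For the bound by $1$, diagonalize $K_h=Q\,\mathrm{diag}(\mu_{h,k})\,Q^\mathsf T$ with $Q$ orthogonal and $\mu_{h,k}\ge0$. Then $B_h=Q\,\mathrm{diag}(\beta_k)\,Q^\mathsf T$ with
\[
\beta_k=\frac{\lambda_{\rm MC}}{\lambda_{\rm MC}+\lambda_{\rm W}\mu_{h,k}}\in(0,1].
\]
Thus $B_h$ is symmetric with $0\preceq B_h\preceq I$, so $B_h^2\preceq I$. Using cyclicity of the trace and $\Sigma_h\succeq0$,
\[
\operatorname{tr}(B_h\Sigma_hB_h)=\operatorname{tr}(\Sigma_h^{1/2}B_h^2\Sigma_h^{1/2})\le\operatorname{tr}(\Sigma_h).
\]
Equivalently, set $\tilde\Sigma=Q^\mathsf T\Sigma_hQ$; then the left side equals $\sum_k\beta_k^2\tilde\Sigma_{kk}$ with $\tilde\Sigma_{kk}\ge0$, which is at most $\sum_k\tilde\Sigma_{kk}=\operatorname{tr}(\Sigma_h)$.

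\emph{Step 3: the isotropic case \eqref{app:isotropic-ratio}.} If $\Sigma_h=\zeta_h^2I$, the numerator is $\zeta_h^2\operatorname{tr}(B_h^2)=\zeta_h^2\sum_k\beta_k^2$ and the denominator is $\zeta_h^2m_h$, which gives \eqref{app:isotropic-ratio}.

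The closing sentence of the theorem follows from the same eigenbasis. The component of $e_h$ along the $k$-th eigenvector is multiplied by $\beta_k$. This factor is strictly less than $1$ exactly when $\mu_{h,k}>0$, and equals $1$ when $\mu_{h,k}=0$ (that is, on $\ker K_h$, which includes the direction of $u_h^*$ itself).

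The only step needing care is the trace inequality for a general, possibly non-diagonal $\Sigma_h$. Passing to the eigenbasis of $K_h$, which is also the eigenbasis of $B_h$, and using that the diagonal entries of a positive semidefinite matrix are nonnegative settles it. Everything else is direct computation.
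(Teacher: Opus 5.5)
Your proof is correct and follows essentially the same route as the paper: you solve the first-order condition using positive definiteness of $\lambda_{\rm MC}I+\lambda_{\rm W}K_h$, you take traces of $\operatorname{Cov}(\bar z_h)=B_h\Sigma_hB_h^{\mathsf T}$, and you bound the ratio using that the eigenvalues of $B_h$ lie in $(0,1]$. The only difference is cosmetic: the paper gets the inequality pathwise from $\|B_he_h\|_2\le\|e_h\|_2$ and then takes expectations, while you prove the equivalent covariance-level bound $\operatorname{tr}(B_h\Sigma_hB_h)\le\operatorname{tr}(\Sigma_h)$, and you make explicit the symmetry of $B_h$ that both versions rely on.
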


\begin{proof}
The stationary equation for \eqref{app:quadratic-model} is
\[
 \bigl(\lambda_{\rm MC}I+\lambda_{\rm W}K_h\bigr)z
 =
 \lambda_{\rm MC}e_h.
\]
The matrix on the left is positive definite,
so $F_h$ is strictly convex and its unique minimizer
is given by \eqref{app:filter}.

Since $\mathbb E[e_h]=0$, we have
$\mathbb E[\bar z_h]=0$ and
$\operatorname{Cov}(\bar z_h)
=B_h\Sigma_hB_h^\mathsf T$.
Taking traces gives the identity in
\eqref{app:general-ratio}.
Moreover, the eigenvalues of $B_h$ are
$\lambda_{\rm MC}/
(\lambda_{\rm MC}+\lambda_{\rm W}\mu_{h,k})$,
which belong to $(0,1]$. Hence
$\|B_he_h\|_2\leq\|e_h\|_2$, proving the inequality.

If $\Sigma_h=\zeta_h^2I$, then
$\mathbb E\|\bar z_h\|_2^2
=\zeta_h^2\operatorname{tr}(B_h^2)$ and
$\mathbb E\|e_h\|_2^2=m_h\zeta_h^2$.
Summing the squared eigenvalues of $B_h$ gives
\eqref{app:isotropic-ratio}.
\end{proof}
 Theorem~\ref{thm:spectral} establishes the mean-square
error estimate
$\mathbb E\|\bar z_h\|_2^2\leq\mathbb E\|e_h\|_2^2$
for the minimizer of \eqref{app:quadratic-model},
showing that incorporating the weak loss into the local
quadratic model does not increase the mean-square error
relative to the original MC data.
\begin{remark}
Since $W_h(u_h^*)$ is positive definite,
$\ker K_h=\ker R_h$. If the MC error satisfies
$R_he_h=0$, then \eqref{app:filter} gives
$\bar z_h=e_h$. Thus, adding the weak loss does
not reduce such errors.

In particular, $\Phi_h(cu_h^*)=0$ for every $c>0$,
so the weak loss cannot determine the multiplicative
constant $c$. This constant is determined by the
MC data-fitting term. If
$\ker R_h=\operatorname{span}\{u_h^*\}$,
all errors satisfying $R_he_h=0$ are scalar
multiples of $u_h^*$.

When $\Sigma_h=\zeta_h^2I$, the mean-square error
ratio tends to zero precisely when
\[
  \frac1{m_h}\sum_{k=1}^{m_h}
  \left(
    \frac{\lambda_{\rm MC}}
    {\lambda_{\rm MC}+\lambda_{\rm W}\mu_{h,k}}
  \right)^2
  \longrightarrow0.
\]
Each zero eigenvalue contributes $1/m_h$ to this
sum. Therefore, $\dim(\ker R_h)/m_h\to0$ is
a necessary condition.
\end{remark}

\subsection{Mean-square error estimate for the nonlinear loss}

We now prove that the mean-square error bound in
Theorem~\ref{thm:spectral} also holds for positive
local minimizers of the original discrete loss.

With $v_h=u_h^*+e_h$, let $\widehat z_h$ be a local
minimizer of $m_h\lambda_{\rm W}\Phi_h(u_h^*+z)
  +\lambda_{\rm MC}\|z-e_h\|_2^2$
over $u_h^*+z>0$.
For $0<t\leq1$, positivity of $u_h^*$ gives
$u_h^*+t\widehat z_h
=(1-t)u_h^*+t(u_h^*+\widehat z_h)
\geq t(u_h^*+\widehat z_h)$ componentwise.
Hence
\[
  \max\{(u_h^*+t\widehat z_h)_j,\varepsilon_u\}
  \geq
  t\max\{(u_h^*+\widehat z_h)_j,\varepsilon_u\}
\]
for every component $j$.
By the definition of $W_h$ and positivity of $a_h$,
\[
  W_h(u_h^*+t\widehat z_h)
  \preceq t^{-1}W_h(u_h^*+\widehat z_h),
  \qquad
  a_h^{\mathsf T}(u_h^*+t\widehat z_h)
  \geq t\,a_h^{\mathsf T}(u_h^*+\widehat z_h).
\]
Since $R_hu_h^*=0$, it follows that
\begin{equation}
\begin{aligned}
  \Phi_h(u_h^*+t\widehat z_h)
  &=
  \frac{
    t^2(R_h\widehat z_h)^{\mathsf T}
    W_h(u_h^*+t\widehat z_h)
    (R_h\widehat z_h)}
    {a_h^{\mathsf T}(u_h^*+t\widehat z_h)}
  \\
  &\leq \Phi_h(u_h^*+\widehat z_h).
\end{aligned}
\label{app:nonlinear-monotonicity}
\end{equation}
Thus, interpolation toward the discrete equilibrium
$u_h^*$ does not increase the weak loss.

By local minimality and
\eqref{app:nonlinear-monotonicity}, for $t<1$
sufficiently close to $1$, $\|\widehat z_h-e_h\|_2^2
  \leq \|t\widehat z_h-e_h\|_2^2.$
Expanding and dividing by $1-t>0$ yields
$(1+t)\|\widehat z_h\|_2^2
\leq2e_h^{\mathsf T}\widehat z_h$.
Letting $t\uparrow1$ and applying the
Cauchy--Schwarz inequality, we obtain $\|\widehat z_h\|_2^2
  \leq e_h^{\mathsf T}\widehat z_h
  \leq\|e_h\|_2\|\widehat z_h\|_2.$
Consequently, $\|\widehat z_h\|_2\leq\|e_h\|_2$.
For random MC errors with
$\mathbb E\|e_h\|_2^2<\infty$, taking expectations, we have
\begin{equation*}
  \mathbb E\|\widehat z_h\|_2^2
  \leq\mathbb E\|e_h\|_2^2.
\end{equation*}

Therefore, incorporating the weak loss does not
increase the mean-square error at any positive
local minimizer of the original discrete loss.
This extends the mean-square error bound of
Theorem~\ref{thm:spectral} to the nonlinear problem.
\subsection{Local error analysis of the neural-network approximation}

We establish a mean-square error estimate for the
local quadratic model of the neural-network loss.  Let $\Phi_P(\theta)$ denote the weak loss evaluated
at the fixed quadrature points.
Suppose that $u_{\theta^*}=u^*$ and that the network
outputs and $\Phi_P$ are twice continuously
differentiable near $\theta^*$.
Let $J_D$ be the Jacobian of
$(u_\theta(y_1),\ldots,u_\theta(y_M))^{\mathsf T}$
at $\theta^*$, and set
$H_W=(M/2)\nabla_\theta^2\Phi_P(\theta^*)$.
Since $\Phi_P\geq0$ and $\Phi_P(\theta^*)=0$,
we have $\nabla_\theta\Phi_P(\theta^*)=0$ and
$H_W\succeq0$.

Define
$e=(v_1-u^*(y_1),\ldots,v_M-u^*(y_M))^{\mathsf T}$
as the MC error.
Linearizing the network outputs at $\theta^*$
and retaining the quadratic term of the weak los, we obtain
, after multiplying the loss by $M$,
\begin{equation}
  Q(\delta\theta;e)
  =
  \lambda_{\rm W}\delta\theta^{\mathsf T}H_W\delta\theta
  +
  \lambda_{\rm MC}\|J_D\delta\theta-e\|_2^2.
  \label{app:network-quadratic}
\end{equation}

At any minimizer $\delta\theta$ of
\eqref{app:network-quadratic}, we have the following first-order
optimality condition
\[
  \bigl(
    \lambda_{\rm W}H_W
    +\lambda_{\rm MC}J_D^{\mathsf T}J_D
  \bigr)\delta\theta
  =
  \lambda_{\rm MC}J_D^{\mathsf T}e.
\]
Taking the scalar product with $\delta\theta$, one obtains
\[
  \lambda_{\rm W}\delta\theta^{\mathsf T}H_W\delta\theta
  +\lambda_{\rm MC}\|J_D\delta\theta\|_2^2
  =
  \lambda_{\rm MC}e^{\mathsf T}J_D\delta\theta.
\]
Since $H_W\succeq0$, the Cauchy--Schwarz inequality
implies
\[
  \|J_D\delta\theta\|_2^2
  \leq e^{\mathsf T}J_D\delta\theta
  \leq\|e\|_2\|J_D\delta\theta\|_2.
\]
Hence $\|J_D\delta\theta\|_2\leq\|e\|_2$.
For random MC errors with
$\mathbb E\|e\|_2^2<\infty$, we obtain
\begin{equation}
  \mathbb E\|J_D\delta\theta\|_2^2
  \leq\mathbb E\|e\|_2^2.
  \label{app:network-error}
\end{equation}
Thus, every minimizer of the local quadratic model
satisfies the mean-square error bound
\eqref{app:network-error}, without requiring
$J_D$ to have full column rank.

In Stage C, the MC data and quadrature points
from Stage B are fixed, and L-BFGS is applied
to minimize
\[
  \lambda_{\rm W}\Phi_P(\theta)
  +
  \frac{\lambda_{\rm MC}}{M}
  \sum_{j=1}^M
  \bigl(u_\theta(y_j)-v_j\bigr)^2.
\]
The estimate \eqref{app:network-error} extends the
preceding mean-square error bound to the linearized
neural-network approximation, whose error at the
observation points is $J_D\delta\theta$.

\subsection{Uniform convergence of the quadrature approximation}\label{app:quadrature-convergence}

We prove that the quadrature approximation of the weak
loss converges uniformly on compact parameter sets.  For a positive candidate $u$, define
\begin{equation}
  \Phi_N(u)
  =
  \frac{\displaystyle
    \sum_{i=1}^N w_i
    \frac{|\mathbf J(X_i)|^2}{u_\varepsilon(X_i)}}
  {\displaystyle
    \sum_{i=1}^N w_i u(X_i)},
  \qquad
  \mathbf J=-D\nabla u-u\nabla V,
  \label{app:empirical}
\end{equation}
where $u_\varepsilon=\max\{u,\varepsilon_u\}$.
Let $\Theta$ be a compact parameter set.
Suppose that $u_\theta>0$ and that $u_\theta$
and $\nabla u_\theta$ are jointly continuous
on $\overline\Omega\times\Theta$.
Positivity and compactness then imply
\begin{equation}
  \inf_{\theta\in\Theta}
  \int_\Omega u_\theta(x)\,\mathrm dx>0.
  \label{app:quadrature-mass}
\end{equation}

For Sobol quadrature, take $w_i=1$.
On a rectangular domain, the points are obtained
by affine rescaling of a Sobol sequence.
Its uniform distribution gives
\begin{equation}
  \frac{|\Omega|}{N}\sum_{i=1}^N f(X_i)
  \longrightarrow
  \int_\Omega f(x)\,\mathrm dx
  \qquad
  \bigl(f\in C(\overline\Omega)\bigr);
  \label{app:sobol-consistency}
\end{equation}
see \cite{Sobol1967}.
For a general bounded Lipschitz domain, the same
conclusion holds when Sobol points are generated
in a containing box and only points in $\Omega$
are retained. Indeed, the boundary has zero
Lebesgue measure, so uniform distribution applies
both to the indicator of $\Omega$ and to the
extension of $f$ by zero outside $\Omega$.

For importance sampling, let $X_i$ be independent
samples from a fixed probability density $q$ on
$\Omega$, with $q>0$ almost everywhere, and take
$w_i=1/q(X_i)$.
For every $f\in C(\overline\Omega)$,
\[
  \mathbb E\left[\frac{|f(X_i)|}{q(X_i)}\right]
  =\int_\Omega|f(x)|\,\mathrm dx<\infty.
\]
The strong law of large numbers therefore yields
\begin{equation}
  \frac1N\sum_{i=1}^N w_i f(X_i)
  \longrightarrow
  \int_\Omega f(x)\,\mathrm dx
  \quad\text{almost surely}.
  \label{app:importance-consistency}
\end{equation}
In particular, taking $f=1$ gives
$N^{-1}\sum_{i=1}^Nw_i\to|\Omega|$ almost surely.
The normalization factors in
\eqref{app:sobol-consistency} and
\eqref{app:importance-consistency} cancel in
the ratio \eqref{app:empirical}.

We next prove uniformity with respect to $\theta$.
Since $V\in C^1(\overline\Omega)$ and
$\varepsilon_u>0$, the numerator and denominator
integrands in \eqref{app:empirical}, with
$u=u_\theta$, depend continuously on $\theta$
in $C(\overline\Omega)$.
Their two families are therefore compact
in the supremum norm.  For each $\varepsilon=2^{-k}$, choose finitely
many integrands that approximate each family
with error at most $\varepsilon$ in this norm.
Apply \eqref{app:sobol-consistency} or
\eqref{app:importance-consistency} to these
finite collections.
The error caused by replacing an integrand
with its approximation is at most
$|\Omega|\varepsilon$ for the normalized
Sobol sum and
$\varepsilon N^{-1}\sum_{i=1}^Nw_i$
for the importance-sampling sum.
The corresponding integral error is at most
$|\Omega|\varepsilon$.
Letting $N\to\infty$ and then $k\to\infty$ yields
uniform convergence in $\theta\in\Theta$ of the
normalized numerator and denominator of
$\Phi_N(u_\theta)$ in \eqref{app:empirical}.
For importance sampling, only countably many
integrands are used in this argument, so the
uniform convergence holds almost surely.
By \eqref{app:quadrature-mass}, we have the normalized
denominators of  $\Phi_N(u_\theta)$ in \eqref{app:empirical} are uniformly bounded away from
zero for all sufficiently large $N$.
Passing to the quotient  in \eqref{app:empirical} yields
\begin{equation*}
  \sup_{\theta\in\Theta}
  \bigl|\Phi_N(u_\theta)-\Phi(u_\theta)\bigr|
  \rightarrow0.
\end{equation*}
Therefore, the Sobol and importance-sampling
approximations converge to the weak loss
\eqref{app:weak-continuous}, uniformly over
$\theta\in\Theta$, with almost sure convergence
in the importance-sampling case.

\subsection{Conclusion}

The weak loss is zero exactly when $u=c\,u^*$,
$c>0$. The MC data are used to determine the
constant $c$.  For the discrete problem, we prove
$\mathbb E\|\widehat z_h\|_2^2
\leq\mathbb E\|e_h\|_2^2$
for every positive local minimizer of the
combined loss. Thus, its mean-square error
does not exceed that of the MC data.
For the quadratic model,
Theorem~\ref{thm:spectral} also exhibits the
exact ratio between these two errors.

For neural networks, the error bound is proved
only for the local quadratic approximation.
The analysis does not prove convergence of
L-BFGS or the same bound for the network
computed by the algorithm.  Finally, \eqref{app:quadrature-convergence}
shows that the weighted sums used in computation
converge to the integrals defining $\Phi$,
uniformly for $\theta\in\Theta$, as the number
of quadrature points increases.

        \bibliographystyle{plainurl}
        \bibliography{ref}
		\end{document}